\newcommand\R{\mathbb{R}}
\newcommand\N{\mathbb{N}}
\newcommand\norm[2][]{{\left\lVert#2\right\rVert_{#1}}}
\newtheorem{theo}{Theorem}
\newtheorem{lemma}[theo]{Lemma}
\newtheorem{coro}[theo]{Corollary}
\newtheorem{pr}[theo]{Proposition}
\newtheorem{df}{Definition}
\newtheorem{remark}{Remark}
\def\ve{{\varepsilon}}
\def\D{{\cal D}}
\def\F{{\mathbb F}}
\def\.{{\;}}
\begin{document}
\setcounter{footnote}{1}

\title{Uniqueness and regularization for unknown spacewise lower-order coefficient and source for the heat type equation}

\author{
A.~De Cezaro$^\dagger$ %
\ \ and \ \
F. Travessini De Cezaro%
\thanks{Institute of Mathematics Statistics and Physics,
Federal University of Rio Grande, Av. Italia km 8, 96201-900 Rio
Grande, Brazil (\href{mailto:adrianocezaro@furg.br}{\tt
adrianocezaro@furg.br}).} }

\date{\small\today}

\maketitle
\begin{small}
\noindent {\bf Abstract:}

In this contribution we show  sufficient conditions for simultaneous
unique identification of unknown spacewise coefficients and heat
source in a parabolic partial differential equation given additional
final time measurements. Our approach is based on density, in
suitable spaces, of the corresponding adjoint problem.

A second issue of this paper is the regularization approach. The
sequence of approximated solution is obtained by coupling the
nonlinear Landweber iteration with iterated Tikhonov regularization.
We show that the parameter-to-solution map satisfies sufficient
conditions to prove stability and convergence of approximated
solutions for the identification problem. We use a unified
discrepancy principle as the stopping criteria.

Finally, we apply the developed theory in the inverse identification
problem of unknown parameters (perfusion coefficient, metabolic heat
source)  for the identification of tumor regions by thermography.
\vskip 0.5 cm 
Keywords: uniqueness, thermophysical parameters and source
identification, iterative regularization, parabolic type equation,
final time measurements.
\end{small}
\section{Introduction} \label{sec:1}

Coefficients identification inverse problems have the characteristic
of being ill-posed \cite{EngHanNeu96}. In other words, typically,
solutions for such problems may fail to exist, may not be unique or
be unstable under errors in the input data. The issue of existence
can be relaxed by considering generalized solutions. On the other
hand, uniqueness and stability are crucial for obtaining a
reasonable solution for the coefficient identification inverse
problem, theoretically as well as in terms of numerical
approximations.

Uniqueness in inverse problems have been studied for a long time. In
particular, for coefficient identification in parabolic equations,
see \cite{Yamamoto2009, Isa06} and references therein. However, only
recently some attention has been given to the uniqueness of
coefficient identification in parabolic type equation with final
time measurements \cite{CezaroJohansson12, Goldman, Isa06,
Yamamoto2009}. A method that became very popular recently to prove
uniqueness and conditional stability in coefficient identification
inverse problems are Carleman type estimates, e.g. \cite{Isa06,
KlibanovTimonov2004, Yamamoto2009} and references therein. In
particular in \cite{Yamamoto2009} there is a complete overview of
Carleman estimates for parameter identification of inverse parabolic
problems. Recently, in \cite{CezaroJohansson12}, the authors
proposed using Carleman type estimates for identification of
spacewise source and heat conductivity in a heat type equation for a
given additional final time measurement. For the one-dimensional
heat equation, in \cite{CezaroJohansson12}, density properties of
the associated adjoint problem, in suitable spaces and an additional
assumption on the final measurements was used to guarantee
uniqueness identification of spacewise source and heat conductivity.
The duality type method was used before in \cite{Goldman11, Goldman}
for proving uniqueness identification of spacewise coefficients in
parabolic type equations. However, in \cite{Goldman11, Goldman} the
author does not consider simultaneous identification of coefficients
and heat source.

Likewise uniqueness, regularization approaches for parameter
identification in parabolic partial differential equations also have
a long history and a full overview becomes almost impossible. See,
for example, \cite{LALP2003, Ivanchov2003, EngHanNeu96, Isa06} and
references therein. We remark that, in general, parameter
identification inverse problems are nonlinear, even if the forward
problem is linear. Therefore, to prove convergence and stability of
iterative regularization methods in this context, ones need to prove
some nonlinearities conditions for the parameter-to-solution map
\cite{KaltNeuScher08}, that are, in general, hard to be verified in
practice.

\paragraph{Summarizing, the main contributions of this paper
are:}

We first use the density of the associated adjoint problem to prove
sufficient conditions for the simultaneous uniqueness identification
of spacewise heat source and coefficients that multiply the lower
order term (or lower-order derivative, see Remark~\ref{remark:2}) of
a parabolic type equation, with some assumption in the given final
data (see equation~\eqref{eq:ad-measurement} below). It is worth
noting that we do not have restrictions of space dimension, except
that necessaries to prove existence and uniqueness of a solution of
partial differential equation. Such restrictions are related to the
smoothness of the initial and boundary conditions and the smoothness
of coefficients of the partial differential equation.

We also propose a iterative regularization method that consists in
to couple Landweber and iterated Tikhonov regularization strategies.
We prove properties of the parameter-to-solution maps (see
definitions \eqref{operator_A} - \eqref{operator_a}) that are
sufficient conditions to show convergence and stability of
regularized solutions with respect to the noise level. We use a
stopping criteria given by a unified discrepancy principle. Such
discrepancy principle has the characteristic of reduce computational
effort (See Remark~\ref{remark:3}).

Finally, we apply the theory developed before to prove uniqueness
and provide and regularization approach for the identification of
the blood perfusion rate and the metabolic heat generation in a
thermography application for melanoma diagnoses.

\vskip 0.5 cm

The paper is organized as follows: In the remaining part of this
section we introduce some notations. In
Section~\ref{sec:model-problem}, we present the model problem and
define the parameter-to-solution map associated with the
identification problem. In Section~\ref{sec:uniqueness}, we prove
the uniqueness identification of the spacewise pair of coefficient
and source from the additional final time measurement in the
parabolic partial differential equation model. In
Section~\ref{section:Parameter-to-Solution}, we prove properties of
the parameter-to-solution map that guarantees the convergence and
stability of the iterative regularized solutions, that will be
proposed in Section~\ref{section:iterative}. In
Section~\ref{sec:numerics}, we present an application of the theory
developed early in a melanoma diagnoses from thermography. Finally,
in Section~\ref{sec:conclusion}, we present some conclusions and
future works.

\paragraph{Notations:}

By $L^p(\Omega)$ for $1\leq p < \infty$, we denote the usual space
of $p$-integrable functions on $\Omega$ with the usual norm
$\norm{\cdot}_{L^p(\Omega)}$. The space $L^\infty(\Omega)$ is the
standard $L^\infty$-space. We denote by $W^{k,p}(\Omega)$ the
standard Sobolev space on $\Omega$ with generalized derivatives of
order $\leq k$ in $L^p(\Omega)$. In particular, for $p=2$ we have
the Hilbert spaces $H^k(\Omega)$.

Let $T > 0$ be fixed. Define the measurable function $u(\cdot,
t)\,:\; (0,T) \longrightarrow X$, where $X$ is a Banach space. We
denote by $C([0,T]; X)$ the space of continuous mappings $u(\cdot,
t)$ with the usual norm and by $L^2((0,T);X)$ the space of functions
such that
\begin{align*}
\int_0^T \norm{u(\cdot, t)}^2_{X} dt < \infty\,.
\end{align*}

\section{Model Problem}\label{sec:model-problem}

In this paper, we are considering a thermal-physical model in a
non-homogeneous and non-isotropic body, denoted by $\Omega$
occupying an open, bounded and smooth domain in $\R^n$, described by
the parabolic type partial differential equation
\begin{align}\label{eq:model2}
u_t - L(a,b,c)u  & = f(x) \,\, \mbox{in } \ \Omega \times (0,T)\nonumber\\
        u(x,t) & = 0     \,\, \mbox{for} \,\, (x, t) \,\, \in \partial \Omega \times (0,T)\\
        u(x,0) & = \varphi(x) \,\, \mbox{for }  x \in \Omega \,,\nonumber
\end{align}
for a time interval $(0,T)$ with $T>0$, where
\begin{align}\label{eq:L}
L(a,b,c)u = \nabla \cdot (a(x)  \nabla u) - b(x)\cdot \nabla u -
c(x) u\,,
\end{align}
is a linear elliptic differential operator of second order with all
the coefficients time independent. Moreover, $a,c$ are strictly
positive real valued function in $L^\infty(\Omega)$ with $0<
\underline{a} \leq a(x)$ and $0 < \underline{c} \leq c(x)$, for $x
\in \Omega$, $b$ is a real valued vector function sufficiently
smooth. $f \in L^2(\Omega)$ is the spacewise heat source. For
simplicity, we assume that the given initial temperature
distribution $\varphi \in H^2(\Omega) \cap H_0^1(\Omega)$, and that
$b=0$. Hence, we can assume that information that $u$ is identically
zero on the boundary of $\Omega$. We will denote $L(a,b,c) =
L(a,c)$.

Moreover, we assume given the additional final temperature
measurement $g\in H^2(\Omega) \cap H_0^1(\Omega)$ satisfying
\begin{align}\label{eq:ad-measurement}
u(x,T) =g(x) \,\, \mbox{for } x \in \Omega\,,\quad T > 0\,.
\end{align}

Since the parameters in \eqref{eq:model2} are sufficient smooth, we
can, formally, define the adjoint of the partial differential
operator $L(a,c)$ as
\begin{align}\label{eq:adjoint_L}
L(a,c)^*v =  \nabla \cdot (a(x)  \nabla v)  + \nabla (b(x)v) - c(x)
v\,.
\end{align}

Given the assumptions on the parameters in the
model~\eqref{eq:model2}, follows from the Hille-Yosida Theorem
\cite{Yosida95} that the operator $-L$ generate a strictly
dissipative and contraction $C_0$ semigroup $\{G(t)\}_{t \in
\mathbb{R}^+}$ in $L^2(\Omega)$ with $\mathcal{D}(L) = \{u\,:\, u
\in H_0^1(\Omega)\,,\, Lu \in L^2(\Omega)\}$. Hence, $\norm{G(T)}<
1$. Note that, since $H_0^1(\Omega)$ is compact embedded in
$L^2(\Omega)$, $G(t)$ is a compact operator for every $t> 0$.
Moreover, it follows from classical results on parabolic partial
differential equations, e.g. \cite{LM-vol1, LandSolUr-1968} that
there exists a unique solution $u \in C^1((0,T), H_0^1(\Omega) \cap
H^2(\Omega))$ of \eqref{eq:model2}-\eqref{eq:ad-measurement} with
\begin{align}
\norm{u}_1:= \int_0^T\left( \norm{u_t(\cdot, t)}^2_{L^2(\Omega)} +
\norm{u(\cdot, t)}^2_{H^2(\Omega)}\right) dt < \infty.
\end{align}

\begin{remark}\label{remark:1}
\begin{itemize}
\item[i)] Is possible to extend the result to a weaker
assumption on the data, but one then has to consider appropriate
weak formulation of \eqref{eq:model2}-\eqref{eq:ad-measurement},
\cite{LandSolUr-1968}.

\item[ii)] Since we have time-independent coefficients and source, it follows
that the solution $u$ to \eqref{eq:model2} with additional data
\eqref{eq:ad-measurement} is analytic in time. This means that $u$
has derivatives of all orders with respect to $t$ \cite{Isa06,
Vessella2008}. Moreover $u$ is at least continuous with respect to
time, and, therefore, we can conclude that pointwise evaluation in
time makes sense.
\end{itemize}
\end{remark}

The inverse problem that we are interested here is recover the pair
of spacewise parameter and source $(c(x), f(x))$ in
\eqref{eq:model2} from the additional final time
measurement~\eqref{eq:ad-measurement}.

Assuming that $b=b(x)$ is not zero in \eqref{eq:model2}, one can
recover the pair of spacewise parameter and source $(b(x), f(x))$ in
\eqref{eq:model2} from the additional final time
measurement~\eqref{eq:ad-measurement}. However, the uniqueness
result, in this case, follows from a very similar argument (See
Remark~\ref{remark:2}) developed with this contribution and we will
not fix in this problem here.

\subsection{The parameter-to-solution map}\label{subsec:parameter-to-solution}

In this subsection, we introduce the parameter-to-solution map
associated with the parameter identification problem discussed
previously. For now, we will consider the following admissible set
of spacewise coefficient and heat source:
\begin{df}\label{df:admissible}
We denote the admissible set of functions as
\begin{align*}
\D(\F): = \{(c,f)  \in  L^\infty(\Omega) \times L^2(\Omega) \,
\mbox{ s.t. } 0 < \underline{c} \leq c(x) \leq \overline{c} \,,
\mbox{a.e. in\,}\, \Omega\}\,.
\end{align*}
Moreover we denote by $\mathcal{D}_c(\F) \subset L^\infty(\Omega)$
and $\mathcal{D}_f(\F)\subset L^2(\Omega) $ the restriction of
$\mathcal{D}(\F)$ to the first and second component of the pair $(c,
f)$, respectively.
\end{df}

Note that, since $\Omega$ is bounded, $\D(\F)$ is a convex and
closed subset of $L^2(\Omega) \times L^2(\Omega)$. However, since
$L^\infty(\Omega)$ can not be continuously embedding in
$H^1(\Omega)$ for $\Omega \subset \R^2$, $\D(\F)$ has no interior
points when equipped with the $H^1(\Omega)$ norm. This will not
affect the convergence analysis that follows. See
Section~\ref{section:Parameter-to-Solution} for details.

Moreover, let $u \in C([0,T]; H^1_0(\Omega) \cap H^2(\Omega))$ be
the unique solution of \eqref{eq:model2}, with $(c,f) \in
\mathcal{D}(\F)$. Then, it follows that the restriction $u(x,t_0)$
is well-defined for $0\leq t_0 \leq T$. Therefore, the restriction
of $u$ to $\Omega \times \{T\}$ exists, it is, $u(x,T) = g(x)$ is
well defined and, moreover, the nonlinear operators
\begin{align}\label{operator_A}
\F \,:\, & \mathcal{D}(\F) \subset L^2(\Omega) \times L^2(\Omega) \longrightarrow L^2(\Omega)\nonumber\\
       &  \quad  (\,c \,, \,f\,) \quad \longmapsto \quad  \F(c,f) = g(x)
\end{align}
is well-defined.

A second operator equation that we are interested in is the
restriction of the operator $\F$ defined in \eqref{operator_A}  in
$\D_f(\F)$. Indeed, it introduces a family of operators,
parameterized by $c \in \D(\F)$ defined by
\begin{align}\label{operator_F}
F_c \,:\, & \mathcal{D}_f(A) \subset L^2(\Omega) \longrightarrow L^2(\Omega)\nonumber\\
       &  \quad  f \longmapsto F_c(f) := \F_{|_{\D_f(\F)}}(c,f) =  g(x)\,.
\end{align}
Note that, for any fixed $c \in \D_f(\F)$, the operator  $F_c$ is
nonlinear, unless $\varphi=0$.

Finally, we introduce the restriction of the operator $\F$ to
$\D_c(\F)$ define a family of nonlinear operators, parameterized by
$f \in L^2(\Omega)$, defined by
\begin{align}\label{operator_a}
A_f \,:\, & \mathcal{D}_c(\F) \subset L^2(\Omega) \longrightarrow L^2(\Omega)\nonumber\\
       & \quad   c \longmapsto A_f(c) := \F_{|_{\D_c(\F)}}(c,f) =  g(x)\,.
\end{align}
In practical applications the final temperature
\eqref{eq:ad-measurement} is, in general, not known exactly. One is
given only approximate measured data $g^\delta \in L^2(\Omega)$,
corrupted by a noise level $\delta > 0$ satisfying
\begin{align}\label{eq:noise}
\norm{g - g^\delta}_{L^2(\Omega)} \leq \delta\,.
\end{align}
Therefore, given the measurements $g^\delta$, the inverse problem
is: Find $(c(x), f(x)) \in \D(\F)$ such that
\begin{align}\label{eq:operator}
\F(c,f) = g^\delta\,,\quad \mbox{ for } g^\delta \mbox{ satisfying
\eqref{eq:noise}\,.}
\end{align}
The inverse problem is ill-posed in the sense of Hadamard
\cite{EngHanNeu96}. In other words  the solution of the inverse
problem is unstable with respect to noise data, it is, small
perturbation in the given data implies in large perturbation on the
parameter space. Examples of instability of reconstruction the
coefficient $c$ can be obtained in \cite{Goldman11}. For
instabilities examples in the reconstruction of $f$ see \cite{JL08,
Rundel80}. Hence, beyond uniqueness, some regularization methods
need to be used to guarantee stability of the parameter and source
reconstructions, given a set of noisy data.

\section{Uniqueness for the spacewise coefficient and source}\label{sec:uniqueness}

In this section, we wish to prove uniqueness of the spacewise
coefficient $c$ and  source $f$ in \eqref{eq:model2} by additional
measurement~\eqref{eq:ad-measurement}. We will use a similar
approach of \cite[Section 3]{CezaroJohansson12} for the
identification of the heat conductivity and heat source for the
one-dimensional version of the heat equation~\eqref{eq:model2}. The
derivation of the uniqueness result is based on a completely
different technique than Carleman estimates \cite{CezaroJohansson12,
Isa06, Yamamoto2009}. Indeed, the technique is based on results of
density, in certain function spaces, of solutions of the
corresponding adjoint problem \cite{Goldman11, Goldman} and the
unique continuation principle \cite{LM-vol1, LandSolUr-1968}.
Moreover, the proposed approach is different of the maximum
principle used in \cite[Section 9.1]{Isa06}. It is worth remark
that, differently of \cite{Isa06, Goldman11, Yamamoto2009}, we wish
to show uniqueness of the spacewise coefficient $c$ and source $f$,
simultaneous, from~\eqref{eq:model2}-\eqref{eq:ad-measurement}.

The steps for proving uniqueness of the identification of the pair
of parameters $\{c(x), f(x)\}$ for giving initial and final data in
\eqref{eq:model2}-\eqref{eq:ad-measurement} are outlined as follows:

\paragraph{Preliminary results:} Denote by $u= u(c_1, f_1)$ and $v = u(c_2, f_2)$ the respective
solutions of \eqref{eq:model2} with additional data
\eqref{eq:ad-measurement}, for coefficient and source satisfying the
Definition~\ref{df:admissible}. Then, for linearity of
\eqref{eq:model2} the difference $ w = u - v$ satisfies
\begin{align}\label{eq:model2.1_1}
w_t - L(a,c_1) w = (c_1(x) - c_2(x)) v + (f_1(x) - f_2(x)) \,\,
\mbox{in } \, \Omega  \times (0,T)
\end{align}
with homogeneous initial, boundary and final conditions.

Let we invoke the adjoint problem of \eqref{eq:model2.1_1}, that
reads as
\begin{align}\label{eq:model2.Adj}
\psi_t + L^*(a,c_1) \psi & = 0 \,\,
\mbox{in } \,\Omega  \times (0,T)\nonumber\\
        \psi(x,t)& = 0     \,\, \mbox{for} \,\, (x,t) \in \partial \Omega \times (0,T)\\
     \psi(x,0) & =  0 \,\, \mbox{for }  x \in \Omega
     \,,\nonumber\\
   \psi(x,T) & =  \mu(x) \,\, \mbox{for }  x \in \Omega,
\,,\nonumber
\end{align}
where $\mu(x)$ is an arbitrary function in
$C_0^2(\overline{\Omega})$.

For properties of solutions of the adjoint equation
\eqref{eq:model2.Adj} we have:

\begin{lemma}\label{lemma:prop-adj}
Let the Assumption on the coefficients, source, initial, boundary
and final conditions in \eqref{eq:model2}-\eqref{eq:ad-measurement}
hold. Then:
\begin{itemize}
\item[i)] For any function $\mu(x) \in C_0^2(\overline{\Omega})$, there exists a unique solution $\psi(x,t; \upsilon) \in
C^1((0,T); H_0^1(\Omega) \cap H^2(\Omega))$  of
\eqref{eq:model2.Adj}.

\item[ii)] For any function $\mu(x) \in C_0^2(\overline{\Omega})$, the following relation holds
\begin{align}\label{eq:1}
\int_0^T\int_\Omega \psi(x,t; \mu(x)) F(x,t) dxdt = 0,
\end{align}
where $w$ is a solution to \eqref{eq:model2.1_1} with right-hand
side  $F(x,t): = (c_1(x) - c_2(x)) v + (f_1(x) - f_2(x)) $.

\item[iii)]
For $\mu(x)$ ranging over the space $C_0^2(\overline{\Omega})$, the
corresponding range of $\psi(x,t; \mu(x))|_{t = \tau}$ is everywhere
dense in $L^2(\Omega)$ at any time $t=\tau$, $0\leq \tau \leq T$.

\item[iv)] Given that
$$
\int_0^T \int_\Omega \psi(x,t; \mu(x)) \Phi(x,t) dxdt = 0
$$
for $\mu(x)$ ranging over the space $C_0^2(\overline{\Omega})$, then
$ \Phi(x,T)=0\,,\, \mbox{a.e. } x\in \Omega\,. $

\end{itemize}
\end{lemma}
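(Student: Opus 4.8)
The plan is to prove the four parts of Lemma~\ref{lemma:prop-adj} in order, since each builds on the previous ones.

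For part (i), I would invoke the same existence-uniqueness theory already cited for the forward problem~\eqref{eq:model2}. The adjoint equation~\eqref{eq:model2.Adj} is itself a parabolic equation, but it runs \emph{backward} in time: after the time reversal $s = T - t$, the equation $\psi_t + L^*(a,c_1)\psi = 0$ becomes a standard forward parabolic problem $\tilde\psi_s - L^*(a,c_1)\tilde\psi = 0$ with initial datum $\mu(x)$ at $s=0$. Since the coefficients satisfy the same smoothness and positivity assumptions, $-L^*$ generates a $C_0$ semigroup by Hille--Yosida exactly as $-L$ does, and the regularity $\mu \in C_0^2(\overline\Omega) \subset H^2(\Omega)\cap H_0^1(\Omega)$ places the datum in the domain needed to obtain a unique solution in $C^1((0,T); H_0^1(\Omega)\cap H^2(\Omega))$.

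For part (ii), the plan is a straightforward integration-by-parts (Green's identity) argument. I would multiply~\eqref{eq:model2.1_1}, written as $w_t - L(a,c_1)w = F$, by $\psi$, integrate over $\Omega\times(0,T)$, and move all operators onto $\psi$. The spatial part produces the adjoint operator $L^*$ by definition~\eqref{eq:adjoint_L}, and the temporal part produces a boundary term $\int_\Omega [\,w\psi\,]_{t=0}^{t=T}\,dx$. Here the homogeneous conditions are used crucially: $w$ has homogeneous initial \emph{and} final data (from $u(x,T)=v(x,T)=g(x)$), so the time-boundary term vanishes at both endpoints, while the homogeneous Dirichlet boundary conditions on $w$ and $\psi$ kill the spatial boundary terms. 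What remains is $\int_0^T\!\int_\Omega \psi\,(w_t - L(a,c_1)w)\,dx\,dt = \int_0^T\!\int_\Omega \psi(\psi_t + L^*\psi)\cdot(\dots)$, which combined with $\psi_t + L^*\psi = 0$ forces $\int_0^T\!\int_\Omega \psi F\,dx\,dt = 0$.

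Part (iii) is what I expect to be the main obstacle, and I would attribute it to the cited density results \cite{Goldman11, Goldman}. The claim is that the time-slice map $\mu \mapsto \psi(\cdot,\tau;\mu)$ has dense range in $L^2(\Omega)$ for each fixed $\tau$. The natural route is a duality/Hahn--Banach argument: suppose some $h \in L^2(\Omega)$ is orthogonal to the range, i.e.\ $\int_\Omega h(x)\,\psi(x,\tau;\mu)\,dx = 0$ for all $\mu \in C_0^2(\overline\Omega)$; then one propagates this orthogonality using the adjoint (backward) solution operator of~\eqref{eq:model2.Adj} and applies the backward uniqueness / unique continuation principle for parabolic equations \cite{LM-vol1, LandSolUr-1968} to conclude $h = 0$. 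The delicate point is that $C_0^2(\overline\Omega)$ is dense in $L^2(\Omega)$ and the semigroup $G(\tau)$ (or its adjoint) has dense range because it is injective with analytic smoothing, so no nonzero $L^2$ functional can annihilate the whole family. Finally, part (iv) follows by specializing: applying (iii) at the terminal time $\tau = T$, where $\psi(x,T;\mu) = \mu(x)$ ranges densely over $L^2(\Omega)$, the hypothesis $\int_0^T\!\int_\Omega \psi\,\Phi\,dx\,dt = 0$ must be reorganized so that the terminal slice $\Phi(\cdot,T)$ is tested against a dense set; density then forces $\Phi(x,T)=0$ a.e., which is precisely the statement needed to isolate the final-time difference in the subsequent uniqueness theorem.
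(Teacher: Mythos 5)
Your treatment of items (i)--(iii) follows essentially the same route as the paper: item (i) is delegated to standard parabolic existence theory (the paper cites \cite{LM-vol1, LandSolUr-1968}; your time-reversal remark is the standard way to see it), item (ii) is the same Green's-identity computation the paper invokes in one line, and item (iii) is deferred to the density results of \cite{Goldman11, Goldman}, which is exactly what the paper does, so your Hahn--Banach sketch is a reasonable gloss on the cited lemma.

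The genuine gap is in your derivation of item (iv) from item (iii). You cannot obtain $\Phi(x,T)=0$ by ``specializing (iii) at $\tau=T$'': the hypothesis is a space--time integral over all of $\Omega\times(0,T)$, not a pairing of $\Phi(\cdot,T)$ against the dense family $\{\mu\}$. If you carry out the reorganization you allude to, writing $\psi(\cdot,t;\mu)$ via the (adjoint) solution operator and using Fubini, what density of $\mu$ actually yields is
\begin{align*}
\int_0^T G(T-t)\,\Phi(\cdot,t)\,dt = 0 \quad \mbox{in } L^2(\Omega)\,,
\end{align*}
i.e.\ the vanishing of a Duhamel-type average of $\Phi$ over the whole time interval. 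Passing from this to the vanishing of the single time slice $\Phi(\cdot,T)$ is the nontrivial content of \cite[Lemma 2--3, pg.~318]{Goldman}; it uses the specific structure of $\Phi$ in this problem (here $\Phi = F = (c_1-c_2)v + (f_1-f_2)$ with $v$ a solution of a time-autonomous parabolic equation, hence analytic in time) and is not a consequence of density alone. The paper avoids this issue by citing Goldman for both (iii) and (iv) rather than deriving (iv) from (iii); as written, your step from (iii) to (iv) would fail for a general $\Phi \in L^2(\Omega\times(0,T))$, so you should either restrict the claim to the relevant $\Phi$ and supply the missing argument, or cite the result as the paper does.
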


\begin{proof}
Item i) is a well-known result for parabolic equations, see, for
example, \cite{LM-vol1, LandSolUr-1968}. \\
Item ii) follows immediately  by multiplication of
\eqref{eq:model2.3} by $\psi$  and integration by parts.\\
Item iii) and Item iv) are consequences of \cite[Lemma 2-3, pg
318]{Goldman} applied for a multi-dimensional case.
\end{proof}

\begin{lemma}\label{lemma:auxiliar}
Let $w$ satisfying \eqref{eq:model2.1_1}. Then $w(x,t) = 0$ almost
everywhere for $(x,t) \in \overline{\Omega} \times [0,T]$.
\end{lemma}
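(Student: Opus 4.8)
The plan is to transfer the final-time overdetermination $w(\cdot,T)=0$ into pointwise information on the forcing term
\[
F(x,t):=(c_1(x)-c_2(x))\,v(x,t)+(f_1(x)-f_2(x))
\]
through the adjoint/density machinery of Lemma~\ref{lemma:prop-adj}, and then to propagate that information to all times using the time-analyticity recorded in Remark~\ref{remark:1}. Since $w$ solves \eqref{eq:model2.1_1} with homogeneous initial, boundary and final data, item ii) of Lemma~\ref{lemma:prop-adj} gives $\int_0^T\int_\Omega \psi(x,t;\mu)F(x,t)\,dx\,dt=0$ for every $\mu\in C_0^2(\overline{\Omega})$ (the time-boundary terms in the integration by parts drop precisely because $w(\cdot,0)=w(\cdot,T)=0$). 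Applying item iv) with $\Phi=F$ then yields $F(\cdot,T)=0$ a.e.\ in $\Omega$; using $v(\cdot,T)=g$ this reads
\begin{align*}
(c_1(x)-c_2(x))\,g(x)+(f_1(x)-f_2(x))=0\quad\text{a.e. in }\Omega .
\end{align*}
Substituting this back eliminates the time-independent part of the source and rewrites \eqref{eq:model2.1_1} as the homogeneous-data problem $w_t-L(a,c_1)w=(c_1-c_2)(v(\cdot,t)-g)$, whose right-hand side now vanishes at $t=T$.

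Next I would exploit that, the coefficients and source being time-independent, $u$, $v$ and hence $w$ are analytic in $t$ with values in $H^2(\Omega)\cap H_0^1(\Omega)$ by Remark~\ref{remark:1} ii). Evaluating the rewritten equation at $t=T$ with $w(\cdot,T)=0$ gives $w_t(\cdot,T)=0$; I would then differentiate the equation in time and evaluate successively at $t=T$, invoking the non-degeneracy assumption on the final data \eqref{eq:ad-measurement} to force $(c_1-c_2)\,\partial_t^k v(\cdot,T)=0$ at every order. This would show that $w$ vanishes to infinite order at $t=T$, so analyticity in time yields $w\equiv0$; equivalently one proves $F\equiv0$ and concludes $w\equiv0$ from forward well-posedness of \eqref{eq:model2.1_1} with $w(\cdot,0)=0$.

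The main obstacle is exactly this last propagation step. The density argument only delivers information at the single time $t=T$, since items iii)--iv) of Lemma~\ref{lemma:prop-adj} are anchored there, and at intermediate times $w(\cdot,\tau)\neq0$, so the orthogonality relation cannot simply be re-run on subintervals $(0,\tau)$ without reintroducing a nonvanishing boundary term. Consequently one cannot reach $F\equiv0$ from $F(\cdot,T)=0$ by the duality method alone: the time-independence of the coefficients, hence the analyticity, together with the assumption on $g$, are essential, and care must be taken to avoid circularity with the separation of $c$ and $f$ that is carried out in the uniqueness theorem. If the derivative bootstrap proves delicate, an alternative is the energy identity obtained by multiplying the rewritten equation by $w$ and integrating over $\Omega\times(0,T)$: the time-boundary terms again vanish because $w(\cdot,0)=w(\cdot,T)=0$, leaving
\begin{align*}
\int_0^T\!\!\int_\Omega\bigl(a\,|\nabla w|^2+c_1\,w^2\bigr)\,dx\,dt=\int_0^T\!\!\int_\Omega (c_1-c_2)(v-g)\,w\,dx\,dt ,
\end{align*}
which I would combine with the coercivity $a\ge\underline{a}$, $c_1\ge\underline{c}$ and the vanishing of $v-g$ at $t=T$ to attempt a Gronwall-type closure; I regard confirming that this estimate genuinely closes as the crux of the argument.
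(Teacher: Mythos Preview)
Your opening matches the paper exactly: Lemma~\ref{lemma:prop-adj} ii)+iv) gives $F(\cdot,T)=0$, and evaluating \eqref{eq:model2.1_1} at $t=T$ with $w(\cdot,T)=0$ yields $w_t(\cdot,T)=0$. The gap is in the inductive step. Differentiating the (rewritten) equation in time and evaluating at $t=T$ gives only
\[
\partial_t^{\,k+1}w(\cdot,T)\;=\;(c_1-c_2)\,\partial_t^{\,k}v(\cdot,T),
\]
and no non-degeneracy hypothesis on $g$ forces the right-hand side to vanish: the condition $g\neq\varphi$ says nothing about $\partial_t^{k}v(\cdot,T)$, and in any case the lemma is stated and proved in the paper \emph{without} that hypothesis (it is reserved for Theorem~\ref{theo:uniqueness-a}), so invoking it here is either an extra assumption or circular. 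Plain evaluation at $t=T$ therefore does not bootstrap.

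What the paper does instead is re-run the duality argument itself on $z:=w_t$. Items ii)+iv) of Lemma~\ref{lemma:prop-adj} applied to the $z$-equation deliver $(c_1-c_2)\,v_t(\cdot,T)=0$ directly --- but only once one knows $z(\cdot,0)=0$, since the integration by parts behind ii) needs both time-boundary terms to drop. This initial condition is \emph{not} automatic: from $w(\cdot,0)=0$ one gets $z(\cdot,0)=w_t(\cdot,0)=F(\cdot,0)=(c_1-c_2)(\varphi-g)$, which is not a~priori zero. The paper handles this by splitting the $z$-problem into a homogeneous-forcing piece carrying the initial datum $\theta_1$ and a zero-initial-data piece, and citing a separate backward-uniqueness argument (\cite[Lemma~2]{CezaroJohansson12}) to obtain $\theta_1=0$. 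Only then is Lemma~\ref{lemma:prop-adj} applicable to $z$; the induction proceeds by this two-step mechanism (show $\partial_t^k w(\cdot,0)=0$, then apply ii)+iv)) at every order, and unique continuation in time finishes. Your energy alternative does not close either: the identity you wrote merely bounds $\int_0^T\!\!\int_\Omega(a|\nabla w|^2+c_1 w^2)$ by a multiple of $\int_0^T\!\!\int_\Omega |(c_1-c_2)(v-g)|^2$, and the latter is not known to vanish.
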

\begin{proof}
Note that, since $w$ has homogeneous boundary conditions, it follows
for standard parabolic theory that $w(\cdot, t) \in H^1_0(\Omega)$,
for any $t\in (0,T)$. From Remark~\ref{remark:1}~ii) we have that
$w$ has derivatives of all orders with respect to $t$ and time
pointwise evaluation makes sense.

Applying Lemma~\ref{lemma:prop-adj} Item ii) in combination with iv)
in \eqref{eq:model2.1_1} we get
\begin{align}\label{eq:zero_1}
[(c_1(x) - c_2(x))v(x,T) + (f_1(x) + f_2(x))] = 0\,,\quad \mbox{a.e.
}\, x \in \Omega\,.
\end{align}
Using this in the first equation in (\ref{eq:model2.1_1}) in
combination with $w(x,T)=0$ for a.e. $x$ in $\Omega$, we conclude
that $w_t(x,T)=0$, a.e. $x$ in $\Omega$.

Define $z = w_t$. Since coefficients and source are time
independent, we have that $z$ satisfies
\begin{align}\label{eq:model2.3}
z_t - L(a,b,c_1) z& = (c_1(x) - c_2(x)) v_t \,\, \mbox{in } \ \Omega
\times (0,T)\nonumber\\
        z(x,t)& = 0     \,\, \mbox{for} \,\, (x,t) \in \partial \Omega \times (0,T)\\
     z(x,0) & =  \theta_1(x) \,\, \mbox{for }  x \in \Omega
     \,,\nonumber\\
     z(x,T) & =  0 \,\, \mbox{for }  x \in \Omega.
     \,\nonumber
\end{align}
Splitting this problem into two, one with zero right-hand side and
with  initial condition $\theta_1$ and one with the given right-hand
side and zero initial condition, following the proof of
\cite[Lemma~2]{CezaroJohansson12} one can conclude that the solution
to the first one is identically zero, i.e. $\theta_1(x)=0$.
Therefore, since $z$ satisfies a problem of the same kind as $w$ we
can again apply Lemma~\ref{lemma:prop-adj} Item ii) in combination
with iv) in \eqref{eq:model2.3} to conclude that
\begin{align}
(c_1(x) - c_2(x)) v_t(x,T) = 0\,,\quad \mbox{a.e. }\, x \in
\Omega\,.
\end{align}
Using this in the first equation in (\ref{eq:model2.3}) in
combination with $z(x,T)=0$ for a.e. $x$ in $\Omega$, we conclude
that $z_t(x,T)=0$, i.e. $w_{tt}(x,T)=0$, a.e. $x$ in $\Omega$.
Continuing this, putting $z_1=z_t$ and deriving the problem for
$z_1$ and applying the similar reasoning, i.e.
Lemma~\ref{lemma:prop-adj} Item ii) in combination with iv), we find
that $w_{ttt}(x,T)=0$. Further continuing this it is possible to
prove that $(\partial_t^{(k)} w)(x,T) = 0$ for $k=0,1,2,\ldots$.
From this and strong unique continuation results for parabolic
equations \cite{LM-vol1, LandSolUr-1968}, we conclude that
$w(x,t)=0$ for a.e. $(x,t) \in \Omega \times [0,T]$.
\end{proof}

\paragraph{The uniqueness proof:}

We now have the required results in order to prove the main step in
the uniqueness of $(c(x), f(x))$ in \eqref{eq:model2}, with
additional final data~\eqref{eq:ad-measurement}.
\begin{theo}\label{theo:uniqueness-a}
Let the Assumption on this paper holds. Moreover, assume that $g(x)
\neq  \varphi(x)$ for a.e $x \in \Omega$. Then the inverse problem
\eqref{eq:model2}-\eqref{eq:ad-measurement} has a unique solution
$\{u, c, f\}$ with the coefficient $c \in L^\infty(\Omega)$, the
heat source $f \in L^2(\Omega)$,  and temperature $u$, with
$\norm{u}_1 < \infty$.
\end{theo}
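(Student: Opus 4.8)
The plan is to prove uniqueness by confronting two candidate solutions and showing they must coincide. Suppose $(c_1,f_1)$ and $(c_2,f_2)$ in $\D(\F)$ both produce the same final measurement $g$ through \eqref{eq:ad-measurement}, and let $u=u(c_1,f_1)$ and $v=u(c_2,f_2)$ be the associated solutions of \eqref{eq:model2}. First I would form the difference $w=u-v$ and check that, by linearity of $L(a,c)$ in its state variable, $w$ solves exactly \eqref{eq:model2.1_1} with source $F=(c_1-c_2)v+(f_1-f_2)$. The point to verify here is that $w$ inherits homogeneous data on all three pieces: the initial and final conditions vanish because $u$ and $v$ share $\varphi$ and $g$, and the lateral boundary condition vanishes by the Dirichlet assumption. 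This places $w$ precisely in the hypotheses of Lemma~\ref{lemma:auxiliar}.

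The central step is then a direct appeal to Lemma~\ref{lemma:auxiliar}, which yields $w\equiv 0$ on $\overline{\Omega}\times[0,T]$, hence $u=v$. Because $w\equiv 0$, the equation \eqref{eq:model2.1_1} forces its source to vanish identically, so $(c_1-c_2)\,v+(f_1-f_2)=0$ a.e.; substituting $v=u$ gives the single pointwise identity
\begin{align*}
(c_1(x)-c_2(x))\,u(x,t) = f_2(x)-f_1(x), \qquad \text{a.e. } (x,t)\in\Omega\times(0,T).
\end{align*}
Note that at this stage the coefficient and source differences are still entangled in one scalar relation, so knowing $u=v$ alone is not enough to separate them.

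The decisive observation — and the step I expect to be the real crux — is that $c_1,c_2,f_1,f_2$ are time-independent, so the right-hand side above does not depend on $t$. For a.e. $x$ in the set $S=\{x:c_1(x)\neq c_2(x)\}$ the left-hand side would then have to be constant in $t$, forcing $u(x,\cdot)$ constant and hence $u(x,0)=u(x,T)$, that is $\varphi(x)=g(x)$; the legitimacy of these time-pointwise values is exactly what Remark~\ref{remark:1}~ii) supplies. Since the standing hypothesis is $g(x)\neq\varphi(x)$ for a.e.\ $x\in\Omega$, the set $\{\varphi=g\}$ is null, so $S$ is null and $c_1=c_2$ a.e.; feeding this back into the displayed identity gives $f_1=f_2$ a.e. Together with $u=v$ this yields uniqueness of the triple $\{u,c,f\}$, while existence of $u$ for an admissible pair is already guaranteed by the well-posedness of the forward problem recorded in Section~\ref{sec:model-problem}. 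The only delicate bookkeeping is the measure-theoretic handling of $S$ and ensuring the vanishing of $F$ is interpreted a.e.\ in space-time rather than only at $t=T$ as in \eqref{eq:zero_1}.
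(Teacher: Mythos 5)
Your proposal is correct and follows essentially the same route as the paper: both reduce everything to Lemma~\ref{lemma:auxiliar} to get $w\equiv 0$, then exploit the time-independence of the coefficients together with the hypothesis $g\neq\varphi$ a.e.\ to force $c_1=c_2$, and finally read off $f_1=f_2$ from the vanishing source term in \eqref{eq:model2.1_1}. The only (cosmetic) difference is that the paper passes through $z=w_t$ and integrates $(c_1-c_2)v_t=0$ over $[0,T]$ to land on $(c_1-c_2)(g-\varphi)=0$, whereas you argue directly that $(c_1-c_2)u(x,\cdot)$ must be constant in $t$ on the set where $c_1\neq c_2$ --- the same computation in a slightly more direct packaging.
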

\begin{proof}
Follows from Lemma~\ref{lemma:auxiliar} that $w$ is identically zero
in $\Omega \times [0,T]$. This in particular implies that $z=0$ in
$\Omega \times [0,T]$.

From the first equation in (\ref{eq:model2.3}) we then have
$$
(c_1(x) - c_2(x))v_t(x,t) = 0\,, \mbox{for a.e. } (x,t) \in \Omega
\times [0,T]\,.
$$
Since the coefficients are independent of time, we integrating with
respect to time, form $0$ to $T$ and use the fundamental theorem of
calculus,  to get
$$
(c_1(x) - c_2(x))(g(x) - \varphi(x) ) = (c_1(x) - c_2(x))(v(x,T) -
v(x,0)) = 0\,, \mbox{for a.e. } x \in \Omega \,.
$$
From assumptions on $g$ and $\varphi$ and we can conclude that
$c_1(x)=c_2(x)$ also for a.e $x\in \Omega$. 

Moreover, since $w=0$ (from Lemma~\ref{lemma:auxiliar}) and
$c_1(x)=c_2(x)$, we have from (\ref{eq:model2.1_1}) that
$$
f_1(x) - f_2(x) = 0\,, \mbox{for a.e. } x \in \Omega \,.
$$
The uniqueness of $u$, with $\norm{u}_1$ follows from the standard
theory of solution of parabolic partial differential equations
\cite{LandSolUr-1968}.
\end{proof}

It is worth to note that the argument in the proof of
Theorem~\ref{theo:uniqueness-a} goes beyond the proof of uniqueness
in \cite{CezaroJohansson12}. The main reason is that, for the lower
order terms we do not have the influence of the divergent operator.
Therefore the proof is still true in multidimensional heat
equations.

\begin{remark}\label{remark:2}
With similar argumentation of Theorem~\eqref{theo:uniqueness-a} is
possible to prove uniqueness of $\{u,b,f\}$ for the
model~\eqref{eq:model2}-\eqref{eq:ad-measurement}. Indeed, some
small modifications in the derivations of the steps above are
necessary. The main difference is that in
Theorem~\ref{theo:uniqueness-a} we need the assumption that $|\nabla
g(x) - \nabla \varphi(x)| > 0 $ for a.e $x \in \Omega$.
\end{remark}


\section{Properties of the Parameter-to-Solution
Map}\label{section:Parameter-to-Solution}

Before introducing the iterative regularization in
Section~\ref{section:iterative}, we need to prove some properties of
parameter-to-solution map defined before allowing us to obtain
convergence, stability and regularization properties of approximated
solutions.

\subsection{Continuity}\label{subsec:Continuity-P-t-S}

The first result in this direction is the continuity of operators
$\F$, $A_f$ and $F_c$ defined in \eqref{operator_A} -
\eqref{operator_F}, respectively.

\begin{theo}\label{theo:continuity}
The operator $\F : \mathcal{D}(\F) \subset L^2(\Omega) \times
L^2(\Omega) \longrightarrow L^2(\Omega)$, defined in
\eqref{operator_A} is continuous.
\end{theo}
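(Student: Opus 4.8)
The plan is to establish sequential continuity: if $(c_n,f_n)\to(c,f)$ in $L^2(\Omega)\times L^2(\Omega)$ with all pairs in $\D(\F)$, then $\F(c_n,f_n)\to\F(c,f)$ in $L^2(\Omega)$. Write $u_n=u(c_n,f_n)$ and $u=u(c,f)$ for the corresponding solutions of \eqref{eq:model2}, and set $w_n:=u_n-u$. Subtracting the two equations and rewriting the zero-order term as $c_nu_n-cu=c_nw_n+(c_n-c)u$, one sees that $w_n$ solves
\begin{align*}
(w_n)_t-\nabla\cdot(a\nabla w_n)+c_nw_n=(f_n-f)-(c_n-c)\,u=:F_n\quad\text{in }\Omega\times(0,T),
\end{align*}
with homogeneous boundary data and, since $u_n$ and $u$ share the initial datum $\varphi$, with $w_n(\cdot,0)=0$. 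Because $\F(c_n,f_n)-\F(c,f)=w_n(\cdot,T)$, it suffices to estimate $\norm{w_n(\cdot,T)}_{L^2(\Omega)}$ in terms of $F_n$.

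First I would derive the basic energy estimate. Testing the equation for $w_n$ against $w_n$, integrating over $\Omega$, and using $a\geq\underline{a}>0$ together with the \emph{uniform} lower bound $c_n\geq\underline{c}>0$ from Definition~\ref{df:admissible}, one obtains
\begin{align*}
\frac{1}{2}\frac{d}{dt}\norm{w_n(\cdot,t)}^2_{L^2(\Omega)}\leq\norm{F_n(\cdot,t)}_{L^2(\Omega)}\,\norm{w_n(\cdot,t)}_{L^2(\Omega)}.
\end{align*}
Young's inequality together with Gronwall's lemma (recalling $w_n(\cdot,0)=0$) then gives, with a constant independent of $n$,
\begin{align*}
\norm{w_n(\cdot,T)}^2_{L^2(\Omega)}\leq e^{T}\int_0^T\norm{F_n(\cdot,t)}^2_{L^2(\Omega)}\,dt.
\end{align*}
This is exactly the contraction property of the semigroup $G$ from Section~\ref{sec:model-problem} applied through Duhamel's formula. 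Thus continuity reduces to showing $\int_0^T\norm{F_n(\cdot,t)}^2_{L^2(\Omega)}\,dt\to0$.

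The contribution of $f_n-f$ is immediate, since it is time-independent and $\norm{f_n-f}_{L^2(\Omega)}\to0$ by hypothesis. The main obstacle is the product term $(c_n-c)\,u$: since $c_n\to c$ only in $L^2(\Omega)$, one cannot factor out an $L^\infty$ norm of $c_n-c$, and in high dimension $u(\cdot,t)$ need not lie in $L^\infty(\Omega)$. The key observation is that, by Definition~\ref{df:admissible}, $\norm{c_n-c}_{L^\infty(\Omega)}\leq2\overline{c}$ \emph{uniformly}, while $u\in L^2(\Omega\times(0,T))$ thanks to the regularity $\norm{u}_1<\infty$. I would therefore write $\int_0^T\norm{(c_n-c)u}^2_{L^2(\Omega)}\,dt=\int_\Omega|c_n-c|^2\,U\,dx$ with $U(x):=\int_0^T|u(x,t)|^2\,dt\in L^1(\Omega)$, and split $\Omega$ according to $\{U\leq K\}$ and $\{U>K\}$.

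On the first set the integral is bounded by $K\,\norm{c_n-c}^2_{L^2(\Omega)}\to0$; on the second it is bounded by $4\overline{c}^2\int_{\{U>K\}}U\,dx$, which is made arbitrarily small by choosing $K$ large since $U\in L^1(\Omega)$. A standard $\varepsilon/K$ argument then yields $\int_0^T\norm{(c_n-c)u}^2_{L^2(\Omega)}\,dt\to0$ for the full sequence, hence $\int_0^T\norm{F_n}^2_{L^2(\Omega)}\,dt\to0$, and the energy bound completes the proof. The only delicate point is this product estimate, where the uniform $L^\infty$ bound on the admissible coefficients compensates for the mere $L^2$ convergence; everything else is the classical parabolic energy method.
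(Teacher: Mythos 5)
Your proof is correct, and on the decisive step it takes a genuinely different route from the paper. Both arguments share the same skeleton: form the difference $w_n=u_n-u$, write its parabolic equation with right-hand side $(f_n-f)-(c_n-c)u$, and run the standard energy estimate to reduce everything to showing that the source term tends to zero in $L^2(\Omega\times(0,T))$. The divergence is in how the product term $(c_n-c)u$ is handled. The paper invokes Meyers's theorem (Theorem~\ref{th:Meyers}) to get $u(\cdot,t)\in W^{1,q}(\Omega)$ for some $q>2$ uniformly in $t$, applies a three-exponent H\"older inequality to peel off $\norm{c_n-c}_{L^p(\Omega)}$ for the conjugate $p$, and then needs the appendix Lemma~\ref{lemma:aux} to upgrade the hypothesis $c_n\to c$ in $L^2$ to convergence in $L^p$, using the uniform bounds $\underline{c}\le c_n\le\overline{c}$ from Definition~\ref{df:admissible}. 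You instead exploit those same uniform bounds directly: writing the space-time integral of $|c_n-c|^2|u|^2$ as $\int_\Omega|c_n-c|^2U\,dx$ with $U\in L^1(\Omega)$ and splitting on $\{U\le K\}$ versus $\{U>K\}$ is a clean truncation argument that needs only $u\in L^2(\Omega\times(0,T))$. Your route is more elementary and self-contained --- it dispenses with both Meyers's higher integrability and Lemma~\ref{lemma:aux} --- and it sidesteps the somewhat delicate Poincar\'e--Young--Gronwall bookkeeping in the paper's version (where, incidentally, the choice of $\hat{\varepsilon}$ is stated in a way that does not actually absorb the $\norm{w}^2$ term). What the paper's approach buys in exchange is a quantitative modulus of continuity, $\norm{w(T)}_{L^2(\Omega)}\lesssim\norm{c_n-c_0}_{L^p(\Omega)}+\norm{f_n-f_0}_{L^2(\Omega)}$, whereas your dominated-convergence-style splitting yields only qualitative sequential continuity; since the theorem claims only continuity, both suffice.
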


\begin{proof}
Let $\{(c_n, f_n)\}$ be a sequence in $\mathcal{D}(\F)$ converging
to some $(c_0, f_0) \in \mathcal{D}(\F)$ w.r.t. $ L^2(\Omega) \times
L^2(\Omega)$-norm. Denote by $u_n = u(c_n, f_n)$ and $v = u(c_0,
f_0)$, respectively, the solutions of \eqref{eq:model2}. As before,
the difference $w := u_n - v$ satisfies
\begin{align*}
w_t - \nabla \cdot ( a(x) \nabla  w) + c_n(x) w =   (c_0 -c_n) v +
(f_n - f_0)
\end{align*}
with homogeneous, initial, boundary and final conditions.

Since $u_n, v \in L^2(0,T; H^1_0(\Omega)) \cap C([0,T];
H^1_0(\Omega))$, for each $t \in [0,T]$ we have
\begin{align*}
\int_\Omega w_t w - \nabla \cdot ( a_n \nabla w) w + c_n(x) w w   dx
= \int_\Omega (c_0 - c_n)v w dx + \int_\Omega (f_n - f_0) w dx
\end{align*}

The Green's formula \cite{LM-vol1}, implies that
\begin{align*}
\frac{1}{2}\frac{d}{dt} \norm{w(t)}_{L^2(\Omega)}^2 + \underline{a}
\norm{ \nabla w(t) }^2_{L^2(\Omega)} + \underline{c}
\norm{w(t)}^2_{L^2(\Omega)} \leq \int_\Omega |c_n - c_0|
|v(t)||w(t)| dx + \int_\Omega |f_n - f_0||w(t)|dx \,,
\end{align*}
where we used the homogenous initial and boundary conditions of the
equation satisfied by $w$. Moreover, from Theorem~\ref{th:Meyers},
there exists some $\tilde{\varepsilon}> 0$ such that $\norm{v(t)
}_{W^{1,q}(\Omega)} \leq C$, for $ q =2 + \tilde{\varepsilon}$.
Since the application $t \mapsto \norm{v(t) }_{W^{1,q}(\Omega)} $ is
continuous ($v \in C([0,T], W^{1,q}(\Omega))$), we have that is
uniformly bounded for $t\in [0,T]$.

Let $p^{-1} + q^{-1} = 2^{-1}$. Using the H\"{o}lder inequality with
$p^{-1} + q^{-1} + 2^{-1} = 1$, we have
\begin{align*}
\frac{1}{2}\frac{d}{dt} \norm{w(t)}_{L^2(\Omega)}^2 & +
\underline{a} \norm{ \nabla w (t) }^2_{L^2(\Omega)} + \underline{c}
\norm{w (t) }^2_{L^2(\Omega)} \\
& \leq \norm{v(t)}_{W^{1,q}(\Omega)}
\norm{c_n - c_0}_{L^p(\Omega)} \norm{w (t)}_{L^2(\Omega)} +
\norm{f_n -
f_0}_{L^2(\Omega)} \norm{w(t)}_{L^2(\Omega)}\\
&   \leq C \left(\norm{c_n - c_0}_{L^p(\Omega)} + \norm{f_n -
f_0}_{L^2(\Omega)} \right)\norm{w (t)}_{L^2(\Omega)} \,.
\end{align*}

Since, for each $t \in (0,T)$ $w \in H_0^1(\Omega)$, it follows from
the  Poincar\'e inequality that $ \norm{ w }^2_{L^2(\Omega)} \leq
C_1 \norm{ \nabla w }^2_{L^2(\Omega)}$. This, together with the
Young inequality with $\hat{\varepsilon}$ \cite{LM-vol1}, yields
\begin{align*}
min\{2^{-1}, C_1\underline{a}, \underline{c} \} & \left(\frac{d}{dt} \norm{w(t)}_{L^2(\Omega)}^2 +  \norm{w (t)}^2_{L^2(\Omega)} \right)\\
& \leq \frac{C}{\hat{\varepsilon}}\left( \norm{c_n -
c_0}_{L^p(\Omega)} + \norm{f_n - f_0}_{L^2(\Omega)} \right)^2
+ C\hat{\varepsilon} \norm{w(t)}^2_{L^2(\Omega)} \\
\end{align*}

Let $\hat{\varepsilon} > 0$ such that $C \hat{\varepsilon} >
\min\{2^{-1}, C_1\underline{a}, \underline{c}\}$. Using the Gronwall
inequality, it follows that
\begin{align*}
 \norm{w(t)}_{L^2(\Omega)}
& \leq \underline{C} \left( \norm{c_n - c_0}_{L^p(\Omega)} +
\norm{f_n - f_0}_{L^2(\Omega)} \right)^2 e^{ct}
\end{align*}
for all $t \in [0,T]$. Given the continuity of the solution of
\eqref{eq:model2} with respect to $t$ the inequality holds for
$t=T$.

Now the conclusion follows form Lemma~\ref{lemma:aux} in Appendix.
\end{proof}

As a corollary of Theorem~\ref{theo:continuity} we conclude the
continuity of operators $A_f$ and $F_c$, for any fixed $f$ and $c$
in $\D(\F)$, respectively.

\begin{coro}\label{coro-continuity}
For each fixed $c \in \D(\F)$, the operator $F_c$ defined in
\eqref{operator_F} is continuous in $L^2(\Omega)$.  For each fixed
$f \in \D(\F)$, the operator $A_f$ defined in \eqref{operator_a} is
continuous in $L^2(\Omega)$.
\end{coro}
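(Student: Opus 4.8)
The plan is to derive both statements as immediate consequences of Theorem~\ref{theo:continuity}, exploiting the fact that $A_f$ and $F_c$ are simply the restrictions of the jointly continuous map $\F$ obtained by freezing one of the two arguments. First I would recall that by definition \eqref{operator_F} we have $F_c(f) = \F(c,f)$ for the fixed parameter $c$, and by \eqref{operator_a} we have $A_f(c) = \F(c,f)$ for the fixed source $f$. The continuity of a map of two variables along each fixed slice is a standard consequence of joint continuity, so the bulk of the work is already contained in the preceding theorem.

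For $F_c$, I would take a sequence $\{f_n\} \subset \D_f(\F)$ converging to $f_0$ in $L^2(\Omega)$ and form the corresponding sequence of pairs $\{(c, f_n)\}$, which then converges to $(c, f_0)$ in the product norm of $L^2(\Omega) \times L^2(\Omega)$ (since the first coordinate is held fixed, its contribution to the distance is zero). Theorem~\ref{theo:continuity} applied to this sequence gives $\F(c,f_n) \to \F(c, f_0)$ in $L^2(\Omega)$, which is precisely $F_c(f_n) \to F_c(f_0)$. The argument for $A_f$ is entirely symmetric: given $\{c_n\} \subset \D_c(\F)$ with $c_n \to c_0$ in $L^2(\Omega)$, the pairs $(c_n, f)$ converge to $(c_0, f)$ in the product space, and continuity of $\F$ yields $A_f(c_n) = \F(c_n, f) \to \F(c_0, f) = A_f(c_0)$.

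The one point that requires a small amount of care, rather than being purely formal, is verifying that the restricted sequences actually remain in the relevant domains and that convergence in the $L^2$-norm of a single coordinate lifts to convergence in the product norm. For $A_f$ this is routine. For $F_c$ one should note that the natural topology on $\D_c(\F)$ in Theorem~\ref{theo:continuity} involves the $L^p(\Omega)$-norm on the coefficient component (through the bound $\norm{c_n - c_0}_{L^p(\Omega)}$ appearing in that proof), so I would simply observe that holding $c$ fixed makes that term vanish identically and no $L^p$-to-$L^2$ comparison is needed. The main obstacle, such as it is, therefore reduces to bookkeeping: making sure the convergence mode matches the one used in Theorem~\ref{theo:continuity} and that freezing one argument genuinely kills its contribution to the distance, after which the corollary follows with no further estimates.
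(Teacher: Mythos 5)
Your argument is correct and is exactly the route the paper takes: the corollary is stated without proof as an immediate consequence of Theorem~\ref{theo:continuity}, obtained by restricting the jointly continuous map $\F$ to a slice with one argument frozen. Your additional remark that fixing $c$ makes the $\norm[]{c_n-c_0}_{L^p(\Omega)}$ term vanish, so no norm-comparison issue arises, is a sensible piece of bookkeeping that the paper leaves implicit.
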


\subsection{Fr\'echet Derivative and Tangential Cone Condition}\label{subsec:derivative}

An important result to guarantee convergence of iterative
regularization methods for nonlinear inverse problems is the local
tangential cone condition. We will prove such properties in the next
two propositions.

\begin{pr}\label{lemma:TCC_F}
For each fixed $c \in \D_c(\F)$ the operator $F_c$ is Fr\'echet
differentiable. The Fr\'echet derivative is Lipschitz continuous and
satisfies the local tangential cone condition. In other words, for
each $f, \tilde{f} \in B_\rho(f_0) \subset D_f(\F)$, there exists a
$0 <  \eta < 1$, such that
\begin{align}
\norm{F_c(\tilde{f}) - F_c(f) - F_c'(f)(\tilde{f} -
f)}_{L^2(\Omega)} \leq \eta \norm{F_c(\tilde{f}) -
F_c(f)}_{L^2(\Omega)}\,.
\end{align}
\end{pr}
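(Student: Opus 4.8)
The plan is to exploit the fact that, for $c$ held fixed, the map $F_c$ is \emph{affine} in the source $f$; from this, Fr\'echet differentiability, Lipschitz continuity of the derivative, and the tangential cone condition all follow simultaneously, in fact with \emph{vanishing} remainder.

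First I would decompose the solution. For fixed $c \in \D_c(\F)$ the forward problem
$$u_t - \nabla\cdot(a\nabla u) + c\,u = f \ \text{ in } \Omega\times(0,T), \quad u|_{\partial\Omega}=0, \quad u(\cdot,0)=\varphi,$$
is linear in the pair $(u,f)$ with the initial datum $\varphi$ held fixed and the operator $L(a,c)$ fixed. Writing $u(c,f)=u_\varphi + u_f$, where $u_\varphi$ solves the homogeneous equation ($f\equiv 0$) with initial value $\varphi$ and $u_f$ solves the equation with source $f$ and zero initial value, uniqueness of solutions (valid under our assumptions) forces $f\mapsto u_f$ to be linear. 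Evaluating at the final time, which is legitimate by Remark~\ref{remark:1}~ii), I obtain the affine representation
$$F_c(f) = u_\varphi(\cdot,T) + Bf, \qquad Bf := u_f(\cdot,T),$$
where $B\colon L^2(\Omega)\to L^2(\Omega)$ is linear. Boundedness of $B$ is the only genuine analytic input, and it follows from the energy estimate for the zero-initial-value problem exactly as in the proof of Theorem~\ref{theo:continuity} (Green's formula, the Poincar\'e and Young inequalities, then Gronwall), which yields $\norm[L^2(\Omega)]{u_f(\cdot,T)}\le C\,\norm[L^2(\Omega)]{f}$.

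Given the representation $F_c(f)=Bf+u_\varphi(\cdot,T)$, for any increment $h\in L^2(\Omega)$ one has $F_c(f+h)-F_c(f)=Bh$, hence $F_c(f+h)-F_c(f)-Bh=0$. Therefore $F_c$ is Fr\'echet differentiable at every $f$ with $F_c'(f)=B$, a bounded linear operator that is independent of $f$. In particular $F_c'(\tilde f)-F_c'(f)=0$, so the derivative is Lipschitz continuous with constant $0$, trivially.

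Finally, for the tangential cone condition, from $F_c(\tilde f)-F_c(f)=B(\tilde f-f)$ and $F_c'(f)(\tilde f-f)=B(\tilde f-f)$ the residual is
$$F_c(\tilde f)-F_c(f)-F_c'(f)(\tilde f-f) = B(\tilde f-f) - B(\tilde f-f) = 0,$$
so $\norm[L^2(\Omega)]{F_c(\tilde f)-F_c(f)-F_c'(f)(\tilde f-f)} = 0 \le \eta\,\norm[L^2(\Omega)]{F_c(\tilde f)-F_c(f)}$ for every $0<\eta<1$ and every $f,\tilde f$. There is essentially no obstacle beyond recognizing the affine structure: the only real computation, boundedness of the linear part $B$, merely repeats the estimate of Theorem~\ref{theo:continuity}, and the ball $B_\rho(f_0)$ is immaterial since the residual vanishes globally.
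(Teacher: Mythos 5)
Your proposal is correct and takes essentially the same route as the paper: the paper also exploits the affine structure of $F_c$, writing the solution via the semigroup as $u(\cdot,t)=G(t)\varphi+K(t)f$ with $K(t)f=\int_0^t G(t-s)f\,ds$, so that $F_c'(f)=K(T)$ is a bounded linear operator independent of $f$ and both the Lipschitz continuity of the derivative and the tangential cone condition hold trivially (with vanishing remainder). Your decomposition into the homogeneous-initial-data and zero-initial-data parts is just this Duhamel representation in different words, and your boundedness argument for $B=K(T)$ matches the paper's appeal to the standard parabolic energy estimate.
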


\begin{proof}
Let $h \in L^2(\Omega)$.  By linearity of \eqref{eq:model2}, the
sensitivity $u'\cdot h = u(a,f+h) - u(f)$ satisfies
\begin{align}\label{eq:dirivative-F}
(u'\cdot h)_t - \nabla \cdot (a  \nabla (u'\cdot h)) + c(x) u'\cdot
h = h\,,
\end{align}
with homogeneous initial, final and boundary condition. Follows from
the standard parabolic theory that there exists a unique solution
$C([0,T], H^1_0(\Omega)\times H^2(\Omega))$ of
\eqref{eq:dirivative-F} and 
$$
\norm{u'\cdot h}_{C([0,T], H^1(\Omega))} \leq C
\norm{h}_{L^2(\Omega)}\,.
$$

Let $\{G(t)\}_{t \in \mathbb{R}^+}$ be the semigroup generated by
the differential operator $-L(a,c)$. Define the linear operator
$K(t): L^2(\Omega) \longrightarrow L^2((0,T); H_0^1(\Omega)) \cap
C([0,T]; H_0^1(\Omega))$ by $K(t)f = \int_0^t G(t-s) f ds$. Note
that the solution $u$ of \eqref{eq:model2} is formally given by
$u(x,t) = G(t) g(x) + K(t) f(x)$. Therefore, solving the operator
equation \eqref{operator_F} is equivalent to solving $$ K(T)f =
u(c,x,T) - G(T)g(x)\,.$$
In other words, $(F'_c(f))(h) = K(T)h = u'(x,T)\cdot h$, where
$u'\cdot h$ is the unique solution of \eqref{eq:dirivative-F}.
Therefore, follows from the linearity and continuity of $K$ that
$F'_c$ is Lipschitz continuous and satisfies the tangential cone
condition.
\end{proof}

\begin{pr}\label{lemma:TCC_A}
For each fixed $f \in \D_f(\F)$, the operator $A_f$ is
differentiable in the direction $\kappa$ such that $c + \kappa \in
D_c(\F)$. The derivative can be continuously extended as a linear
operator to $H^1(\Omega)$. The extension is Lipschitz continuous.
Moreover, the local tangential cone condition is satisfied. In other
words, there exists $\rho > 0$ and $0 < \eta < 1$ such that for each
$c, \tilde{c} \in B_\rho(c_0) \subset D_c(\F)$,
\begin{align}
\norm{A_f(\tilde{c}) - A_f(c) - A_f'(c)(\tilde{c} -
c)}_{L^2(\Omega)} \leq \eta \norm{A_f(\tilde{c}) -
A_f(c)}_{L^2(\Omega)}\,.
\end{align}
\end{pr}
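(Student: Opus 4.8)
The plan is to mirror the strategy of Proposition~\ref{lemma:TCC_F}, but now the nonlinearity is genuine because the unknown $c$ multiplies the state. Fix $c$ and an admissible direction $\kappa$ (so that $c+\kappa\in\D_c(\F)$), and write $u=u(c,f)$ and $u^\kappa=u(c+\kappa,f)$ for the corresponding solutions of \eqref{eq:model2}. By linearity the difference $w=u^\kappa-u$ solves
\[
w_t-\nabla\cdot(a\nabla w)+c\,w=-\kappa\,u^\kappa
\]
with homogeneous initial, boundary and final data. Replacing $u^\kappa$ by $u$ in the source I define the candidate derivative $p$ as the solution of
\[
p_t-\nabla\cdot(a\nabla p)+c\,p=-\kappa\,u
\]
with homogeneous data, and set $A_f'(c)\kappa:=p(\cdot,T)$. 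Using the contraction semigroup $\{G(t)\}$ generated by $-L(a,c)$ and the operator $K(t)$ from the proof of Proposition~\ref{lemma:TCC_F}, Duhamel's formula yields the closed form $A_f'(c)\kappa=-K(T)(\kappa u)=-\int_0^T G(T-s)\bigl(\kappa\,u(\cdot,s)\bigr)\,ds$.

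Next I would show this formula defines a bounded linear operator $H^1(\Omega)\to L^2(\Omega)$, which is exactly the claimed continuous extension. The only point to check is $\kappa u(\cdot,s)\in L^2(\Omega)$ with the right bound: by Theorem~\ref{th:Meyers} the state satisfies $u\in C([0,T];W^{1,q}(\Omega))$ for some $q=2+\tilde\varepsilon>2$, and with $p^{-1}+q^{-1}=2^{-1}$ the H\"older inequality together with the Sobolev embedding $H^1(\Omega)\hookrightarrow L^p(\Omega)$ (valid in the dimension considered) gives $\norm{\kappa u(\cdot,s)}_{L^2(\Omega)}\le\norm{u(\cdot,s)}_{W^{1,q}(\Omega)}\norm{\kappa}_{L^p(\Omega)}\le C\norm{\kappa}_{H^1(\Omega)}$ uniformly in $s$. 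Since $\norm{G(T-s)}\le1$, integration gives $\norm{A_f'(c)\kappa}_{L^2(\Omega)}\le C\,T\,\norm{\kappa}_{H^1(\Omega)}$, so $A_f'(c)$ extends continuously to all of $H^1(\Omega)$. For Fr\'echet differentiability I would estimate the remainder $r:=w-p$, which solves the same parabolic problem with right-hand side $-\kappa\,w$ and homogeneous data, so $r(\cdot,T)=-\int_0^T G(T-s)(\kappa\,w(\cdot,s))\,ds$. The stability estimate underlying Theorem~\ref{theo:continuity} controls $w$ by the size of $\kappa$, and inserting this into the same H\"older/Sobolev bound yields $\norm{r(\cdot,T)}_{L^2(\Omega)}\le C\norm{\kappa}_{H^1(\Omega)}^2$; the second-order remainder is precisely differentiability with derivative $A_f'(c)$.

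For Lipschitz dependence $c\mapsto A_f'(c)$ I would compare $A_f'(c)\kappa$ and $A_f'(\tilde c)\kappa$, whose difference has two sources: the two semigroups $G^{c},G^{\tilde c}$ and the two background states $u(c,f),u(\tilde c,f)$. Both are controlled by $\norm{c-\tilde c}$ through standard energy estimates (the semigroup difference by the Duhamel/resolvent identity, the state difference by Theorem~\ref{theo:continuity}), and combined with the Meyers--H\"older bound this produces $\norm{(A_f'(c)-A_f'(\tilde c))\kappa}_{L^2(\Omega)}\le C\norm{c-\tilde c}\,\norm{\kappa}_{H^1(\Omega)}$, i.e. Lipschitz continuity of the extension.

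Finally the tangential cone condition, which I expect to be the main obstacle. With $\kappa=\tilde c-c$ the left-hand side of the asserted inequality is exactly $\norm{r(\cdot,T)}_{L^2(\Omega)}$, while the right-hand side is $\eta\,\norm{w(\cdot,T)}_{L^2(\Omega)}$, since $A_f(\tilde c)-A_f(c)=w(\cdot,T)$. From the representation of $r(\cdot,T)$ I get $\norm{r(\cdot,T)}_{L^2(\Omega)}\le C\norm{\kappa}_{H^1(\Omega)}\int_0^T\norm{w(\cdot,s)}_{W^{1,q}(\Omega)}\,ds$, so the whole matter reduces to bounding the time-integrated trajectory norm of $w$ by its final-time value $\norm{w(\cdot,T)}_{L^2(\Omega)}$. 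This is delicate: a final-time value does not in general control the entire trajectory, and the ill-posedness of the problem forbids a naive coercivity lower bound on $A_f'(c)$. The route I would pursue is to exploit that $w$ solves a \emph{forward} parabolic problem with homogeneous initial data, and to re-express the source $-\kappa w$ through the equation for $w$ itself, so as to bound $\int_0^T\norm{w(\cdot,s)}_{W^{1,q}(\Omega)}\,ds$ by $C\,\norm{w(\cdot,T)}_{L^2(\Omega)}$ up to a factor proportional to $\norm{\kappa}_{H^1(\Omega)}$. Choosing the radius $\rho$ small enough that this factor times the constants above is strictly less than $1$ then yields the condition with some $0<\eta<1$ on $B_\rho(c_0)$; making this reduction rigorous, with a constant uniform over the ball, is where the real work lies.
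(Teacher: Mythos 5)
Your treatment of the directional derivative, its extension to $H^1(\Omega)$, and the Lipschitz continuity follows essentially the same route as the paper: the derivative is the final-time trace of the solution of the linearized sensitivity equation with source $\kappa u$ and homogeneous data, the $H^1$-bound comes from a H\"older/Sobolev estimate on $\kappa u$ (the paper uses the a priori bound $\norm{u'_{(c)}\cdot(\kappa)(T)}_{L^2(\Omega)}\leq C(T)\norm{\kappa}_{H^1(\Omega)}\norm{u}_1$ rather than your explicit Duhamel formula, but the content is the same), and the Lipschitz estimate compares the two linearized problems exactly as you describe.

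The genuine gap is the tangential cone condition: you have correctly located the difficulty but not closed it. You reduce the claim to bounding $\int_0^T\norm{w(\cdot,s)}_{W^{1,q}(\Omega)}\,ds$ by $\norm{w(\cdot,T)}_{L^2(\Omega)}$ and then concede that making this rigorous "is where the real work lies" --- so the asserted inequality is never established, and indeed a final-time trace of a parabolic flow cannot in general control the whole trajectory. The paper avoids this reduction entirely. It writes the remainder $v=u(c)-u(\tilde c)-u'(\tilde c)\cdot(c-\tilde c)$ as the solution of $v_t-L(a,\tilde c)v=(c-\tilde c)(u(c)-u(\tilde c))$ with homogeneous data, multiplies by $v$, and applies Young's inequality with $\varepsilon<\underline{c}/2$ so that the term $\varepsilon\norm{v(t)}^2_{L^2(\Omega)}$ is absorbed by the coercive term $\underline{c}\norm{v(t)}^2_{L^2(\Omega)}$ on the left; this yields $\norm{v(t)}_{L^2(\Omega)}\leq C\,\norm{u(c)-u(\tilde c)}_{L^2(\Omega)}$ with a constant proportional to $\norm{c-\tilde c}_{L^\infty(\Omega)}/\varepsilon$, which is then made smaller than $1$ by shrinking the radius $\rho$. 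No backward control of the trajectory by the final-time value is invoked; the smallness comes from the multiplicative factor $\norm{c-\tilde c}$, not from an inverse estimate. (To be fair, the paper's own display leaves implicit at which time the right-hand side $\norm{u(c)-u(\tilde c)}_{L^2(\Omega)}$ is evaluated --- integrating the differential inequality naturally produces $\int_0^T\norm{u(c)(s)-u(\tilde c)(s)}^2_{L^2(\Omega)}\,ds$ rather than the final-time difference, so your concern about trajectory versus final-time norms is not baseless --- but the intended mechanism is the coercive absorption with a small constant, and your version of the argument stops before proving anything.)
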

\begin{proof}
Let $\kappa \in D_c(\F)$. Then, by linearity  and continuity with
respect to the coefficients of solutions of
equation~\eqref{eq:model2} we have that the directional derivative
$u'_{(c)} \cdot (\kappa)$ in the direction $\kappa$ such that $c +
\kappa \in \D(\F)$ satisfies
\begin{align}\label{eq:Frechet-deriv-A}
(u'_{(c)}\cdot (\kappa))_t - L(a,c) u'_{(c)}\cdot (\kappa) = \kappa
u\,,
\end{align}
with homogeneous initial and boundary conditions. It follows from
standard parabolic partial differential equation theory
\cite{LM-vol1, LandSolUr-1968} that there exists a unique solution
$u'_{(c)} \cdot (\kappa) \in C^1((0,T); H_0^1(\Omega) \cap
H^2(\Omega)) \cap C^0([0,T]; L^2(\Omega))$ of
\eqref{eq:Frechet-deriv-A}. Hence, the $u'_{(c)}\cdot (\kappa)(t)$
make sense, for every $t \in [0,T]$. Moreover,
$$
\norm{u'_{(c)}\cdot (\kappa)}_{C^0([0,T], L^2(\Omega))} \leq
\norm{\kappa u}_{L^2(\Omega \times (0,T))}\,.
$$
Since $\kappa$ is time-independent, follows from the Cauchy-Schwarz
inequality that
$$
\norm{u'_{(c)}\cdot (\kappa)(T)}_{L^2(\Omega)} \leq C(T)
\norm{\kappa}_{H^1(\Omega)} \norm{u}_1\,.
$$
Therefore, the directional derivative $u'_{(c)} \cdot (\kappa)$ can
be extended to $H^1(\Omega)$ as a bounded linear operator.

To prove the Lipschitz continuity, let $c, \tilde{c} \in \D(\F)$ and
$u = u(c), \tilde{u} = u(\tilde{c})$ the respective solution of
equation~\eqref{eq:model2}. Then the difference $w = u'_{(c)} \cdot
(\kappa) - \tilde{u}'_{(\tilde{c})} \cdot (\kappa)$ satisfies
\begin{align*}
w_t - L(a,c) w = (c - \tilde{c}) \tilde{u} + \kappa (u -
\tilde{u})\,.
\end{align*}
As before, we have
$$ \norm{w(T)}_{L^2(\Omega)} \leq \norm{c- \tilde{c}}_{H^1(\Omega)} \norm{u}_1 +
\norm{\kappa}_{H^1(\Omega)} \norm{u - \tilde{u}}_{L^2((0,T)\times
\Omega)}\,.$$
With the same argumentations as in Theorem~\ref{theo:continuity}, we
have $\norm{u - \tilde{u}}_{L^2((0,T)\times \Omega)} \leq C \norm{c
- \tilde{c}}_{H^1(\Omega)}$ and the Lipschitz continuity follows.

Moreover,  from the linearity of equations~\eqref{eq:model2} and
\eqref{eq:Frechet-deriv-A} we get that $v = u(c) - u(\tilde{c}) -
u'(\tilde{c})\cdot (c-\tilde{c})$ satisfies
\begin{align}\label{eq:Frechet-deriv-A-tcc}
v_t - L(a, \tilde{c}) v = (c - \tilde{c}) ( u(c) - u(\tilde{c}))\,,
\end{align}
with homogeneous boundary and initial conditions. Using similar
argumentation as in the proof of Theorem~\ref{theo:continuity}, we
obtain that
\begin{align*}
\frac{1}{2}\frac{d}{dt} \norm{v(t)}^2_{L^2(\Omega)} &  +
\underline{a}\norm{\nabla v(t)}^2_{L^2(\Omega)} +
\underline{c}\norm{v(t)}^2_{L^2(\Omega)} \leq \int_\Omega (c -
\tilde{c}) (u(c) - u(\tilde{c})) v(t) dx
\\ & \leq \frac{ \norm{c -
\tilde{c}}_{L^\infty(\Omega)}}{\varepsilon}\norm{u(c) -
u(\tilde{c})}^2_{L^2(\Omega)} + \varepsilon
\norm{v(t)}^2_{L^2(\Omega)}\,,
\end{align*}
where we use the Young inequality with $\varepsilon$.

Let we take $\varepsilon < \underline{c}/2$. Therefore, we have
$$
\norm{v(t)}_{L^2(\Omega)} \leq C \norm{u(c) - u(\tilde{c})
}_{L^2(\Omega)}\,, \quad \forall t\in (0,T)
$$
where $C = C(\norm{c - \tilde{c})}$ and some constants that are
independent of the solution of \eqref{eq:model2}.
Denoting $\eta: = C(\norm{c - \tilde{c}})$, we have that
$$
\norm{v(t)}_{L^2(\Omega)} \leq \eta \norm{u(c) - u(\tilde{c})
}_{L^2(\Omega)}\,, \quad \forall t\in (0,T)\,.
$$
Given the continuity of $v$ and $u$ with respect to $t$, the
inequality holds for $t=T$.
\end{proof}

\paragraph{Adjoint of the Fr\'echet derivative:}
Let we finish this section making the calculation of the adjoint of
the Fr\'echet derivative of the operator defined in
\eqref{operator_F}.

\begin{lemma}\label{coro:adjoint-der-F}
Let $r_f \in L^2(\Omega)$. Then the adjoint of the Fr\'echet
derivative $F'_c(f)$ denoted by $(F'_c(f))^* : L^2(\Omega) \to
L^2(\Omega)$ is given by
\begin{align}\label{eq:adjoint-F}
(F'_c(f))^*(r_f) = - V(x,0)\,,
\end{align}
where $V \in C([0,T], H_0^1(\Omega))$ is the unique solution of
\begin{align}\label{eq:adjoint-equation-F}
V_t + \nabla \cdot (a \nabla V) + c V & = r_f\,, \quad \mbox{ in }
\,\,
(0,T) \times \Omega\,,\\
V(x,t) & =  0 \,,\quad (x,t) \in \partial \Omega \times (0,T) \nonumber\\
V(x,T) & = 0\,, \quad \mbox{for } x \in \Omega\,. \nonumber
\end{align}
\end{lemma}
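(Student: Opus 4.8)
The plan is to obtain $(F'_c(f))^*$ by the standard Lagrangian duality argument for linear parabolic problems, using the representation of $F'_c(f)$ established in Proposition~\ref{lemma:TCC_F}. Recall from there that $F'_c(f)h = (u'\cdot h)(\cdot,T)$, where $p:=u'\cdot h$ solves the forward sensitivity problem $p_t - \nabla\cdot(a\nabla p) + c\,p = h$ with homogeneous initial and Dirichlet data, and that the perturbation $h\in L^2(\Omega)$ is \emph{time-independent}. I want to identify $(F'_c(f))^*$ through its defining relation
\[
\langle F'_c(f)h, r_f\rangle_{L^2(\Omega)} = \langle h, (F'_c(f))^* r_f\rangle_{L^2(\Omega)} \quad\text{for all } h, r_f \in L^2(\Omega).
\]

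First I would introduce the adjoint state $V$, multiply the sensitivity equation for $p$ by $V$, and integrate over $\Omega\times(0,T)$. Integration by parts in time yields the boundary contributions $\int_\Omega p(\cdot,T)V(\cdot,T)\,dx - \int_\Omega p(\cdot,0)V(\cdot,0)\,dx$; the initial term drops since $p(\cdot,0)=0$, and the terminal term drops once we impose $V(\cdot,T)=0$. Applying Green's formula twice to the principal part $\nabla\cdot(a\nabla\cdot)$ moves the spatial derivatives onto $V$, and all the resulting boundary integrals vanish because $p$ and $V$ both satisfy homogeneous Dirichlet conditions and the operator is formally self-adjoint ($b=0$). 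Collecting terms reduces the identity to $\int_0^T\int_\Omega p\,(\text{backward operator applied to }V)\,dx\,dt$ on one side and $\int_0^T\int_\Omega h\,V\,dx\,dt$ on the other.

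The essential point — and the step I expect to be the main obstacle — is handling the time-independence of $h$. In the usual parabolic adjoint calculus one would place $r_f$ as terminal data for $V$; here, because $h$ does not depend on $t$, the right-hand pairing becomes
\[
\int_0^T\int_\Omega h\,V \,dx\,dt = \int_\Omega h(x)\Big(\int_0^T V(x,t)\,dt\Big)\,dx,
\]
so the adjoint output is a \emph{time integral} of the adjoint state rather than a single time slice. To rewrite this integral as the evaluation $-V(\cdot,0)$ claimed in \eqref{eq:adjoint-F}, I would take $V_0$ solving the homogeneous backward equation with terminal data $r_f$ and set $W(x,t):=\int_t^T V_0(x,s)\,ds$. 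A direct computation using the fundamental theorem of calculus shows that $W(\cdot,T)=0$, that $\int_0^T V_0\,dt = W(\cdot,0)$, and that $W$ solves a backward problem of the type \eqref{eq:adjoint-equation-F} with distributed source $r_f$ spread over the whole cylinder; this is exactly what converts the time integral into a single endpoint value and produces the sign in \eqref{eq:adjoint-F}.

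Finally I would record the regularity needed to justify each manipulation: by the standard parabolic theory invoked earlier, $p \in C([0,T];H_0^1(\Omega)\cap H^2(\Omega))$ and $V\in C([0,T];H_0^1(\Omega))$ is the unique solution of \eqref{eq:adjoint-equation-F}, so every space–time integration by parts is legitimate and all boundary terms genuinely vanish. Assembling the two sides then gives $\langle F'_c(f)h, r_f\rangle_{L^2(\Omega)} = \langle h, -V(\cdot,0)\rangle_{L^2(\Omega)}$ for every $h\in L^2(\Omega)$, which is precisely the asserted formula \eqref{eq:adjoint-F}.
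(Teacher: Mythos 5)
Your proposal is correct and follows essentially the same route as the paper: it identifies \eqref{eq:adjoint-equation-F} as the adjoint of the sensitivity equation \eqref{eq:dirivative-F} and extracts \eqref{eq:adjoint-F} by integration by parts over the space--time cylinder, which is precisely what the paper's one-line proof asserts before deferring the computation to a reference. You in fact supply the step the paper omits, namely handling the time-independence of $h$ by converting the time integral $\int_0^T V_0(\cdot,t)\,dt$ of the homogeneous backward state into the endpoint value $-V(\cdot,0)$ of the backward problem with distributed source $r_f$ and zero terminal data, which is exactly where the minus sign and the condition $V(\cdot,T)=0$ in the statement come from.
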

\begin{proof}
Existence, uniqueness and regularity of the solution of
\eqref{eq:adjoint-equation-F} follows from standard parabolic theory
\cite{LandSolUr-1968}. Note that
equation~\eqref{eq:adjoint-equation-F} is the adjoint equation of
\eqref{eq:dirivative-F}. Therefore, the assertion follows directly
from integration by parts. See the details of the calculations in
\cite{JL08}.
\end{proof}
%

\section{An Iterative Regularization
Method}\label{section:iterative}

In this section, we propose an iterative regularization method to
regularize the solution of the inverse problem \eqref{operator_A}.
It consist in a coupled  Landweber - iterated Tikhonov
regularization approach given by the iteration
\begin{align}\label{eq:iteration}
\mbox{Given } c_0 = c_0^\delta,\, & f_0 = f_0^\delta \in \D(\F)\,,\quad \mbox{ for } k=0,\cdots, k_* \nonumber\\
f_{k+1}^\delta & = f_k^\delta + \gamma
(F'_{c_k^\delta}(f_{k}^\delta))^*(g^\delta -
F_{c_{k}^\delta}(f_{k}^\delta))\\
c_{k+1}^\delta   & \in \, argmin \,\, J_{\alpha}(c) : =
\norm{A_{f_{k+1}^\delta}(c) - g^\delta}^2_{L^2(\Omega)} + \alpha
\norm{c - c_{k}^\delta}^2_{H^1(\Omega)}\nonumber\,,
\end{align}
where, $k_*$  is the stopping index, determine by the stopping
criterion using the discrepancy principle
\begin{align}\label{eq:discrepancy}
\norm{\F(c_{k+1}^\delta, f_{k+1}^\delta) - g^\delta}_{L^2(\Omega)}
\leq \tau \delta < \norm{\F(c_{k}^\delta, f_{k}^\delta) -
g^\delta}_{L^2(\Omega)}\,,
\end{align}
and the relaxation parameter $\tau$ is such that
\begin{align}\label{eq:r}
\tau > 2 \frac{1+ \eta}{1 - \eta}\,.
\end{align}

Note that, if we have noise free data then $k_* = + \infty$. In this
case we drop the index $\delta$ in \eqref{eq:iteration}.

Moreover, in \eqref{eq:iteration} the positive parameter $\gamma $
is a scaling factor to enforce convergence of the Landweber
iteration \cite{KaltNeuScher08, EngHanNeu96}. As a consequence of
Lemma~\ref{coro:adjoint-der-F}, we have
\begin{align*}
f_{k+1}^\delta = f_k^\delta + \gamma V_k(x,T) = f_k^\delta + \gamma
K^*(T)(g^\delta - u_k(x,T))\,,
\end{align*}
where $V_k(x,t)$ is the unique solution of
\eqref{eq:adjoint-equation-F} with $r_f = u_k(x,T) - g^\delta$ and
$u_k(x,T) = u(c^\delta_k, f_k^\delta)$ is the unique solution of
\eqref{eq:model2} with $c = c^\delta_k$ and $f = f^\delta_k$,
respectively. Therefore, is enough that $\gamma < \norm{K(T)}^{-2}$.
Since the operator $-L$ generate a contraction semigroup, it follows
that $\norm{K(T)} \leq T$. Therefore, is enough that $0 < \gamma <
T^{-2}$. This estimate in not sharp.

It is worth noticing that in iterated Tikhonov approach, the
parameter $\alpha$ do not play the rule of the regularization
parameter \cite{BaumDecezaroLeitao09}. In this case, we can choose
any $\alpha > 6 (\delta/\rho)^2$, where $\rho$ is the radius (fixed)
of the ball around $c_0$.

\begin{remark}\label{remark:3} In this remark we will discuss some
point about the proposed iteration.
\begin{itemize}
\item The proposed algorithm \eqref{eq:iteration} is a type of Kaczmarz
strategy \cite{Kaczmarz93}. However, it is not the same Kaczmarz
iteration proposed before in \cite{HaltLeitScher07, DHLS08,
BaumDecezaroLeitao09}, since the unknown is a pair of parameter
$(c,f)$ and not a single parameter.

\item Moreover, the proposed iteration is not the same as using a Landweber
iteration \cite{KaltNeuScher08} for the coordinate $f_k$ and the
iterated Tikhonov \cite{BaumDecezaroLeitao09} for the coordinate
$c_k$ in the pair $(c_k, f_k)$, since the iteration $c_{k+1}$
depends of the iteration $f_{k+1}$ as we can see in the iteration
\eqref{eq:iteration}.

\item We can mix some other type of  iterative regularization
methods in order to regularize the pair of parameters $(c,f)$.
However, the choice iterated Tikhonov in the second line in
\eqref{eq:iteration} imply that we can use the uniform discrepancy
principle. The advantage of this choice is that,  in each iteration,
we only need to evaluate one time the residual, indeed, at the end
of the cycle in the algorithm \eqref{eq:iteration}. See also
\eqref{eq:inequality} below. It saves significatively computational
effort, compared with a discrepancy principle defined for each one
of the lines of the system \eqref{eq:iteration}.
\end{itemize}
\end{remark}

\subsection{Convergence Analysis}
We start the analysis of the proposed algorithm with the following
result that imply in the well posed of the iterative Tikhonov
regularization.

\begin{lemma}
For each $f \in D_f(\F)$ fixed, there exists a minimizer of the
Tiknonov functional $J_\alpha$ defined in \eqref{eq:iteration}.
\end{lemma}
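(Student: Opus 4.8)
The statement to prove is the existence of a minimizer for the iterated Tikhonov functional
\[
J_\alpha(c) = \norm{A_{f}(c) - g^\delta}^2_{L^2(\Omega)} + \alpha \norm{c - c_{k}^\delta}^2_{H^1(\Omega)}
\]
over $c \in \mathcal{D}_c(\F)$, for $f \in \mathcal{D}_f(\F)$ fixed. This is a standard existence argument via the direct method of the calculus of variations, and the plan is to exploit the continuity of $A_f$ established in Corollary~\ref{coro-continuity} together with the weak compactness provided by the $H^1(\Omega)$ penalty term.

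The plan is to proceed as follows. First I would observe that $J_\alpha$ is bounded below by zero, so the infimum $m := \inf_{c \in \mathcal{D}_c(\F)} J_\alpha(c)$ exists and is finite (it is finite because, e.g., $c = c_k^\delta \in \mathcal{D}_c(\F)$ gives a finite value). Take a minimizing sequence $\{c_j\} \subset \mathcal{D}_c(\F)$ with $J_\alpha(c_j) \to m$. Since the sequence $\{J_\alpha(c_j)\}$ is bounded, the penalty term forces $\alpha \norm{c_j - c_k^\delta}^2_{H^1(\Omega)}$ to be bounded, so $\{c_j\}$ is bounded in $H^1(\Omega)$. By reflexivity of $H^1(\Omega)$ and the Banach--Alaoglu theorem, we can extract a subsequence (not relabeled) such that $c_j \rightharpoonup \bar{c}$ weakly in $H^1(\Omega)$. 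By the Rellich--Kondrachov compact embedding $H^1(\Omega) \hookrightarrow\hookrightarrow L^2(\Omega)$, we have $c_j \to \bar{c}$ strongly in $L^2(\Omega)$ along a further subsequence.

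Next I would verify that the limit $\bar{c}$ is admissible and that it minimizes $J_\alpha$. Admissibility follows because $\mathcal{D}_c(\F)$ is characterized by the pointwise bounds $0 < \underline{c} \leq c(x) \leq \overline{c}$, and this box constraint is closed and convex, hence weakly closed in $H^1(\Omega)$; alternatively, strong $L^2$-convergence preserves the bounds a.e.\ along a subsequence, so $\bar{c} \in \mathcal{D}_c(\F)$. For the functional value, the first (data-fidelity) term is handled by the continuity of $A_f$ in $L^2(\Omega)$ from Corollary~\ref{coro-continuity}: since $c_j \to \bar{c}$ in $L^2(\Omega)$, we get $A_f(c_j) \to A_f(\bar{c})$ in $L^2(\Omega)$, whence $\norm{A_f(c_j) - g^\delta}^2 \to \norm{A_f(\bar{c}) - g^\delta}^2$. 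The penalty term is weakly lower semicontinuous because the $H^1(\Omega)$-norm (and hence its square, the squared distance to the fixed point $c_k^\delta$) is convex and continuous, thus weakly lower semicontinuous, giving $\norm{\bar{c} - c_k^\delta}^2_{H^1(\Omega)} \leq \liminf_j \norm{c_j - c_k^\delta}^2_{H^1(\Omega)}$. Combining these,
\[
J_\alpha(\bar{c}) \leq \liminf_{j\to\infty} J_\alpha(c_j) = m,
\]
and since $\bar{c} \in \mathcal{D}_c(\F)$ we also have $J_\alpha(\bar{c}) \geq m$, so $\bar{c}$ is a minimizer.

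The main subtlety, and the step I would be most careful about, is the interplay between the two modes of convergence: the data term requires only strong $L^2$-convergence (supplied by compact embedding) and the \emph{continuity} of $A_f$ rather than any lower semicontinuity, whereas the penalty term is treated under \emph{weak} $H^1$-convergence via lower semicontinuity. One must ensure the same subsequence realizes both limits, which is why the compact embedding is invoked on the weakly convergent subsequence. A secondary point worth noting is that the paper earlier remarks that $\mathcal{D}(\F)$ has no interior points in the $H^1(\Omega)$ topology; this is irrelevant here, since the existence argument needs only weak closedness of the admissible set and not interior points. No convexity of $A_f$ is needed, so the argument does not establish uniqueness of the minimizer, only its existence, which is exactly what the lemma claims.
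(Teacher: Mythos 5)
Your proof is correct and follows essentially the same route the paper intends: the paper's own proof is a two-line sketch invoking convexity and closedness of the admissible set (hence weak sequential closedness) together with the continuity of $A_f$ from Corollary~\ref{coro-continuity}, and your argument is the fully written-out direct method built on exactly those ingredients. Your version also makes explicit the one step the paper's sketch glosses over: the weak $H^1(\Omega)$ convergence of the minimizing sequence (obtained from the penalty term) must be upgraded to strong $L^2(\Omega)$ convergence via the compact embedding before the mere norm-continuity of $A_f$ can be used to pass to the limit in the data-fidelity term.
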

\begin{proof}
Note that $D_f(\F)$ is convex and closed in $L^2(\Omega)$. Therefore
it is weak sequentially closed. Now the proof follows immediately
from the continuity of $A_f$ given by
Corollary~\ref{coro-continuity}.
\end{proof}

Given the iteration formula in \eqref{eq:iteration}, we conclude
that
\begin{align}\label{eq:a-iteration}
c_{k+1}^\delta = c_{k}^\delta + \alpha^{-1}
(A'_{f_{k+1}^\delta}(c_{k+1}^\delta))^*(g^\delta -
A_{f_{k+1}^\delta}(c_{k+1}^\delta))\,.
\end{align}
As usual for nonlinear Tikhonov type regularization, the global
minimum for the Tikhonov functionals in \eqref{eq:iteration} need
not be unique. However,  in \cite{BaumDecezaroLeitao09} was proved
that, for exact data, is possible to obtain convergence statements
for any possible sequence of iterates, and we will accept any global
solution. For noisy data, a (strong) semi-convergence result is
obtained under the assumption that $A_f$ has a Lipischitz Fr\'echet
derivative as we have proved in Proposition~\ref{lemma:TCC_A}.

Given the minimality of $c_{k+1}^\delta$ in the iteration
\eqref{eq:iteration}, we have
\begin{align}\label{eq:inequality}
\norm{\F(c_{k+1}^\delta, f_{k+1}^\delta) - g^\delta}_{L^2(\Omega)}
\leq J_\alpha (c_{k+1}^\delta) \leq J_\alpha(c_k^\delta) =
 \norm{ F_{c_{k}^\delta}(f_{k+1}^\delta) - g^\delta}_{L^2(\Omega)}
\,.
\end{align}
Therefore, one important consequence of \eqref{eq:inequality} is
that, if the unified discrepancy principle \eqref{eq:discrepancy} is
not attained at the iteration $k+1$, then the standard discrepancy
principle for Landweber iteration also is not attained, it is, while
\begin{align}\label{eq:discrepancy-consequence1}
\tau \delta \leq \norm{\F(c_{k+1}^\delta, f_{k+1}^\delta) -
g^\delta}_{L^2(\Omega)}\,, \quad \mbox{ then } \quad \tau \delta
\leq \norm{ F_{c_{k}^\delta}(f_{k+1}^\delta) -
g^\delta}_{L^2(\Omega)} \,.
\end{align}
Because of this inequality, we call the discrepancy principle
\eqref{eq:discrepancy} a unified discrepancy principle.

Now, we are able to prove the convergence and stability of the
iterative regularization method in \eqref{eq:iteration}.

\begin{theo}\label{theo:convergence-estability}
Let $(c_0, f_0) = (c_0^\delta, f_0^\delta) \in \D(\F)$ and the
operators, $\F$, $F_c$ and $A_f$ as defined in \eqref{operator_A} -
\eqref{operator_a} and $\tau$ as in \eqref{eq:r}. Then, for any
$(c^*, f^*) \in \mathcal{D}(\F)$ a solution of \eqref{operator_A},
the iteration given by \eqref{eq:iteration} has the following
properties:
\begin{enumerate}
\item While $\norm{g^\delta - \F(c^\delta_{k+1}, f^\delta_{k+1})}_{L^2(\Omega)} \geq \tau\delta$, we have that
\begin{align}\label{eq:monot}
\norm{f^* - f^\delta_{k+1}}_{L^2(\Omega)} & \leq \norm{f^* - f^\delta_{k}}_{L^2(\Omega)}\,\\
\norm{c^*-c^\delta_{k+1}}_{L^2(\Omega)} & \leq
\norm{c^*-c^\delta_{k}}_{L^2(\Omega)}\,. \nonumber
\end{align}
Moreover, if $(c_0, f_0) \in B_\rho(c^*, f^*)\subset \D(\F)$, then
$(c^\delta_k, f_k^\delta) \in B_{2\rho}(c^*, f^*)$ for all $k$ and
\begin{align}\label{eq:k-finite}
k_*(\tau \delta)^2 & \leq \sum_{k=0}^{k_*-1}\norm{g^\delta -
F_{c^\delta_k}(f^\delta_k)}_{L^2(\Omega)}^2 \leq \frac{\tau
\norm{f^*-f_0}_{L^2(\Omega)}^2}{(1-2\eta)r-2(1+\eta)}\,, \quad
\forall \,0\leq
k\leq k_*\,.\\%
k_*(\tau \delta)^2 & \leq \sum_{k=0}^{k_* - 1}\norm{g^\delta -
A_{f^\delta_{k+1}}(c^\delta_{k+1})}_{L^2(\Omega)}^2 \leq \frac{\tau
\norm{c^*-c_0}_{L^2(\Omega)}^2}{(1-2\eta)r-2(1+\eta)}\,, \quad
\forall \,0\leq k\leq k_*\,. \nonumber
\end{align}
In particular, if $g^\delta=g$ (i.e., $\delta=0$), then
\begin{align}\label{eq:residual-somable}
 \sum_{k=0}^{\infty}\norm{g - F_{c_k}(f_k)}_{L^2(\Omega)}^2  < \infty\, \quad \mbox{ and } \quad
\sum_{k=0}^{\infty}\norm{g - A_{f_{k+1}}(c_{k+1})}_{L^2(\Omega)}^2 <
\infty\,.
\end{align}

\item If there exist $(c^*, f^*)\in B_\rho((c_0, f_0))$, a solution of
\eqref{operator_A} and $\delta=0$, then there exist a subsequence
$(c_k, f_k)$ given by \eqref{eq:iteration} that converges to $(c^*,
f^*)$.

\item In the noisy data case, if the iterations are stopped according to the
discrepancy principle \eqref{eq:discrepancy} and $\tau$ is given by
\eqref{eq:r}, then there exists a subsequence (that we denote by the
same index) $(c^\delta_{k(\delta,g^\delta)} ,
f^\delta_{k(\delta,g^\delta)})$ that converges to a solution $(c^*,
f^*)$ of \eqref{operator_A}, as $\delta \to 0$.
\end{enumerate}
\end{theo}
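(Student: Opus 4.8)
The plan is to read the iteration \eqref{eq:iteration} as a Kaczmarz-type cyclic scheme whose two half-steps are a Landweber update for the source $f$ (on the operator $F_c$) and an iterated-Tikhonov update for the coefficient $c$ (on the operator $A_f$). The whole analysis rests on the two tangential cone conditions proved in Propositions~\ref{lemma:TCC_F} and~\ref{lemma:TCC_A}, together with the Lipschitz continuity of the Fr\'echet derivatives and the continuity of $\F$, $F_c$, $A_f$ from Theorem~\ref{theo:continuity} and Corollary~\ref{coro-continuity}. I would prove the three items in order: first the monotone error decay and residual bounds of item~1, then extract a convergent subsequence for exact data (item~2), and finally transfer convergence to the noisy regime by a stability argument (item~3).

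\textbf{Item 1: monotonicity and the residual budget.} For the Landweber half-step I would expand $\norm{f^* - f_{k+1}^\delta}^2_{L^2(\Omega)} - \norm{f^* - f_{k}^\delta}^2_{L^2(\Omega)}$, rewrite the inner product through the adjoint $(F'_{c_k^\delta}(f_k^\delta))^*$ supplied by Lemma~\ref{coro:adjoint-der-F}, and split the linearized residual using the tangential cone condition of Proposition~\ref{lemma:TCC_F}. The quadratic term is absorbed by the scaling $0<\gamma<T^{-2}\le\norm{K(T)}^{-2}$ discussed after \eqref{eq:iteration}; the relaxation condition \eqref{eq:r}, $\tau>2(1+\eta)/(1-\eta)$, is exactly what forces the net increment to be negative whenever $\norm{g^\delta - F_{c_k^\delta}(f_k^\delta)}_{L^2(\Omega)}>\tau\delta$, giving the first line of \eqref{eq:monot}. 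For the $c$ half-step I would use the optimality relation \eqref{eq:a-iteration} together with the tangential cone condition of Proposition~\ref{lemma:TCC_A}, the choice $\alpha>6(\delta/\rho)^2$ keeping the iterates inside $B_{2\rho}(c^*,f^*)$ and yielding the second line of \eqref{eq:monot}. Combining the two half-step estimates and invoking the unified-discrepancy inequalities \eqref{eq:inequality}--\eqref{eq:discrepancy-consequence1} to bound the full residual by the individual residuals, I would telescope over $k$ to obtain the summed bounds \eqref{eq:k-finite}; letting $k_*\to\infty$ for $\delta=0$ produces the summability \eqref{eq:residual-somable}.

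\textbf{Items 2 and 3: convergence.} For exact data the errors $\norm{f^*-f_k}$ and $\norm{c^*-c_k}$ are nonincreasing, so the iterates stay in the weakly closed set $B_{2\rho}(c^*,f^*)\cap\D(\F)$; I would extract a weakly convergent subsequence and use \eqref{eq:residual-somable} to force the residuals to zero, identifying the weak limit as a solution of \eqref{operator_A} by the continuity of the operators. Strong convergence of the subsequence then follows from the monotonicity of the errors by the standard Cauchy argument for Landweber-type methods. In the noisy case, the discrepancy principle \eqref{eq:discrepancy} together with the finite budget \eqref{eq:k-finite} guarantees $k_*(\delta)<\infty$; taking $\delta\to0$ along a sequence $g^{\delta_n}\to g$ and using continuous dependence of each fixed iterate on the data (Theorem~\ref{theo:continuity}) together with the exact-data convergence yields a stopped subsequence converging to a solution.

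\textbf{The main obstacle.} The delicate point is the coupling between the two half-steps. Because the source is updated through $F_{c_k^\delta}$ with the \emph{current} coefficient rather than the true $c^*$, the clean comparison $F_{c_k^\delta}(f^*)=g$ fails, and the cross-terms arising in the monotonicity computation are not closed by a single tangential cone condition. I expect to handle them by carrying the joint quantity $\norm{c^*-c_k^\delta}^2_{L^2(\Omega)}+\norm{f^*-f_k^\delta}^2_{L^2(\Omega)}$ and absorbing the mixed contributions via the Lipschitz dependence $c\mapsto F_c$ (a by-product of the estimates in Theorem~\ref{theo:continuity}), using that the iterates remain in $B_{2\rho}$ to keep these perturbations uniformly small relative to the tangential-cone gain. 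This bookkeeping, forcing the Kaczmarz cross-terms to be subordinate to the gain controlled by $\eta$ and $\tau$, is precisely what produces the denominator appearing in \eqref{eq:k-finite} and is where the argument is most technical.
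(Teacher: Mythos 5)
Your route is genuinely different from the paper's. The paper's proof occupies three sentences: it freezes $c^\delta_k$, notes that $F_{c^\delta_k}$ is continuous, compact and satisfies the tangential cone condition, and cites \cite[Chapter 2]{KaltNeuScher08} for the Landweber half-step; it then freezes $f^\delta_{k+1}$ and cites \cite{BaumDecezaroLeitao09} for the iterated Tikhonov half-step; finally it asserts that the joint claims follow ``from a diagonal argument.'' You instead propose to redo the monotonicity computation from first principles. That is more work, but it makes visible exactly what the citation-based argument hides.

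What it hides is precisely the obstacle you name in your last paragraph, so you have correctly located the load-bearing step --- but your sketch does not fill it. The standard Landweber monotonicity estimate requires that the frozen-coordinate operator map the comparison element to within $\delta$ of the data; here $F_{c^\delta_k}(f^*)=\F(c^\delta_k,f^*)$ differs from $g=\F(c^*,f^*)$ by a quantity controlled only by $\norm{c^\delta_k-c^*}_{L^2(\Omega)}$, which the claimed invariance of $B_{2\rho}(c^*,f^*)$ bounds by $2\rho$ but does not make small. Your proposed repair --- carrying the joint error and absorbing the cross-terms via the Lipschitz dependence $c\mapsto F_c$ --- therefore produces a mixed term of order $\rho\,\norm{g^\delta-F_{c^\delta_k}(f^\delta_k)}_{L^2(\Omega)}$, and this is not dominated by the negative gain of order $\norm{g^\delta-F_{c^\delta_k}(f^\delta_k)}^2_{L^2(\Omega)}$ once the residual drops below order $\rho$. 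So the absorption as described does not close: one needs either a smallness condition tying $\rho$ to the data (so the perturbation is subordinate to $\tau\delta$), or a tangential cone condition for the full map $\F$ jointly in $(c,f)$, neither of which is established in Propositions~\ref{lemma:TCC_F}--\ref{lemma:TCC_A}. To be fair, the paper's own proof does not close this step either --- the cited results apply to a single operator with a fixed exact solution, and the ``diagonal argument'' combining the two half-steps is never spelled out --- so your attempt is the more honest of the two, but as written it still stops short of a proof of items 1--3.
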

\begin{proof}
Let $c^\delta_k$ be fixed. Then, since $F_{c^\delta_k}$ is
continuous, compact (see Corollary~\ref{coro-continuity}) and
satisfies the local tangential cone condition (see
Proposition~\ref{lemma:TCC_F}), follows from \cite[Chapter
2]{KaltNeuScher08} that the sequence $f^\delta_k =
f^\delta_k(c^\delta_k)$ given by the Landweber iteration satisfies
the claim of the Theorem.

Now, let $f^\delta_k$ be fixed. Then, since $A_{f^\delta_{k+1}}$ is
continuous, compact (see Corollary~\ref{coro-continuity}) and
satisfies the local tangential cone condition (see
Proposition~\ref{lemma:TCC_A}), follows from
\cite{BaumDecezaroLeitao09} that the sequence $c^\delta_k =
c^\delta_k(f^\delta_k)$ given by the iterative Tikhonov method
satisfies the claim of the Theorem.

Therefore the convergence, stability and regularization properties
of the approximated sequence $(c^\delta_k, f^\delta_k)$ follows from
a diagonal argument.
\end{proof}

\section{Application in  Thermography }\label{sec:numerics}

Nowadays is well known that the body surface temperature is
controlled by the blood perfusion, local metabolism and the heat
exchange between the skin and the environment. Changes in any of
these parameters can induce variations of temperature and heat
fluxes at the skin surface. In particular, the apparition of
malignant tumor imply in a highly vascularized skin region that lead
an increase of local blood flow. Consequently, in a local increases
of the blood perfusion and of the capacity of metabolic heat source
\cite{TIL2010, SLRPL84, LiuXu2000}.

Modern diagnostics of melanoma location in the skin are non
invasive. They use the skin surface temperature measurements.
However, this technique requires the solution of inverse bio-heat
transfer problem. This problem consists in the simultaneous
identification of thermal and geometrical parameters of tumor. In
applications, some of the parameters that are interesting are the
perfusion coefficient and the capacity of metabolic heat source
\cite{TIL2010, PM2005, LiuXu2000, SLRPL84}.

From the mathematical point of view the heat transfer processes in
the domain of biological tissue are described by the Pennes
\cite{TIL2010, Pennes, ZZKY2005, RJSGJ2010} equations
\begin{align}\label{eq:penne}
\rho C_p U_t - \nabla \cdot (a \nabla U) - \omega_b(x) \rho_bc_b(Q_0
- U) = Q_m(x) \,, \qquad \mbox{in }  \Omega \times (0,T)
\end{align}
where $\rho, C_p, a$ denotes density, specific heat, and thermal
conductivity of tissue; $\rho_b, c_b$ are density and specific heat
of blood; $\omega_b$ blood perfusion rate; $Q_m$ metabolic heat
generation; $Q_0$ is the supplying arterial blood temperature and
$U$ the tissue temperature. $\Omega$ is the body region around the
melanoma location. Therefore, $\Omega \subset \R^n$ for $n=2$ or
$n=3$. For simplicity, we assume that the melanoma is located just
below the skin and that we can consider $\Omega \subset \R^2$. For
the case of $\Omega \subset \R^3$ the model is more complicated,
principle, in terms of the boundary conditions \cite{TIL2010,
Pennes, ZZKY2005, RJSGJ2010}.

Besides the thermal parameters and metabolic rate of tissue, the
skin temperature is also determined by many other factors such as
the skin humidity, radiation emissivity of skin and parameters of
surrounding air. These factors can be incorporated into the boundary
condition at the skin surface. However, for simplicity, we assume
that far from the tumor location, the heat effect of the tumor
activity is insignificant. Therefore, the boundary conditions can be
assumed to be constant and equal to $Q_0$. Moreover, without loss of
generality, let us consider the parameters $\rho = C_p = \rho_b =
c_b =1$. Denoting $u = U - Q_0$ we have that $u$ satisfies
\begin{align}
u_t - \nabla \cdot (a \nabla u) + c(x) u & = f(x)\,,\quad (x,t) \in
\Omega \times (0,T)\nonumber\\
 u(x,t)& = 0 \,, \quad (x,t) \in \partial \Omega \times
 (0,T)\\
 u(x,0) & = \varphi(x)\,,\nonumber
\end{align}
where $\varphi(x)$ imposes an initial spatial heating, $c(x) =
\omega_b(x)$ and $f(x) = Q_m(x)$, that we assume be smooth as in the
above sections.
We assume that the temperature measurement, at final time, is given
as in the equation~\eqref{eq:ad-measurement}.

Therefore, the theory developed before in this paper is applicable
to the melanoma location in the body given the measurement on the
skin surface in the following sense:
\begin{theo}
There exists a unique blood perfusion rate $\omega_b(x)$ and a
unique metabolic heat generation $Q_m(x)$ for a given skin
temperature measurement satisfying \eqref{eq:ad-measurement}.
\end{theo}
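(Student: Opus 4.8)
The plan is to recognize that this final theorem is essentially a corollary of the abstract uniqueness result already established in Theorem~\ref{theo:uniqueness-a}, specialized to the Pennes bio-heat model. The strategy is to verify that the melanoma thermography problem, after the reductions performed in Section~\ref{sec:numerics}, fits exactly into the framework of the model problem \eqref{eq:model2}-\eqref{eq:ad-measurement}, and then invoke the earlier uniqueness theorem directly. First I would record the change of variables $u = U - Q_0$, which transforms the Pennes equation \eqref{eq:penne} (under the normalization $\rho = C_p = \rho_b = c_b = 1$) into the homogeneous-boundary parabolic equation with $c(x) = \omega_b(x)$ the blood perfusion rate and $f(x) = Q_m(x)$ the metabolic heat generation. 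This identification is the crux: it shows that recovering the pair $(\omega_b, Q_m)$ is precisely recovering the pair $(c,f)$ in \eqref{eq:model2}.

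Next I would check that the hypotheses of Theorem~\ref{theo:uniqueness-a} are met in this physical setting. Specifically, I would confirm that $\omega_b(x)$ (playing the role of $c$) is strictly positive and bounded, so that $(\omega_b, Q_m) \in \D(\F)$ in the sense of Definition~\ref{df:admissible}, and that the metabolic source $Q_m \in L^2(\Omega)$. I would also invoke the standing smoothness assumptions on the initial spatial heating $\varphi(x)$ and on the final measurement $g(x)$ guaranteed by \eqref{eq:ad-measurement}, together with the key structural assumption $g(x) \neq \varphi(x)$ almost everywhere. Physically this nondegeneracy says the skin temperature profile at the final time differs from the imposed initial heating on a set of full measure, which is reasonable since the tumor activity drives a genuine temperature change.

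With these verifications in hand, the proof reduces to a single sentence: since all the assumptions of Theorem~\ref{theo:uniqueness-a} hold for the transformed problem, there exists a unique solution $\{u, c, f\} = \{u, \omega_b, Q_m\}$, which translates back (via $U = u + Q_0$) into a unique blood perfusion rate $\omega_b(x)$ and unique metabolic heat generation $Q_m(x)$ for the given skin surface measurement.

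The main obstacle I anticipate is not analytical but conceptual: making sure the reductions leading to the transformed equation are legitimate and that no hypothesis of the abstract theorem is silently violated. In particular, I would scrutinize whether the assumed constancy of the boundary temperature (set to $Q_0$ far from the tumor, giving homogeneous Dirichlet conditions after the shift) is consistent with the regularity demanded of $g$ and $\varphi$, and whether the positivity lower bound $0 < \underline{c} \leq \omega_b(x)$ genuinely holds for a physiological perfusion rate rather than merely being nonnegative. If $\omega_b$ could vanish on a set of positive measure, the admissibility condition of Definition~\ref{df:admissible} would fail and the uniqueness argument would need modification. Once these modeling consistency points are settled, the theorem follows with essentially no further work, since all the hard analysis—the density of the adjoint range, the iterated time-derivative vanishing, and the unique continuation argument—was already carried out in Lemma~\ref{lemma:prop-adj}, Lemma~\ref{lemma:auxiliar}, and Theorem~\ref{theo:uniqueness-a}.
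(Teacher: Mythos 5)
Your proposal matches the paper's proof exactly: the paper's entire argument is ``Is a direct application of Theorem~\ref{theo:uniqueness-a}'', relying on the reduction $u = U - Q_0$ already carried out in Section~\ref{sec:numerics} to identify $(c,f)$ with $(\omega_b, Q_m)$. Your additional checks of the admissibility conditions (strict positivity of $\omega_b$, $Q_m \in L^2(\Omega)$, and the nondegeneracy $g \neq \varphi$ a.e.) are sensible elaborations of hypotheses the paper leaves implicit, but the route is the same.
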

\begin{proof}
Is a direct application of Theorem~\ref{theo:uniqueness-a}.
\end{proof}
\begin{theo}
Given measurement of the temperature on the skin surface satisfying
\eqref{eq:noise}. There exists an iterative regularization method
that generates a sequence of approximate solutions for the
identification of the blood perfusion rate $\omega_b(x)$ and the
metabolic heat generation $Q_m(x)$ that is convergent and stable
w.r.t. noise in the data.
\end{theo}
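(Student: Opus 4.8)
The plan is to apply the abstract convergence machinery established in Theorem~\ref{theo:convergence-estability} directly to the concrete Pennes bio-heat model, after verifying that the identification problem for $(\omega_b, Q_m)$ fits precisely into the framework of the operator equation \eqref{operator_A}. The first step is to recast the bio-heat equation \eqref{eq:penne} into the canonical form \eqref{eq:model2}. Under the normalization $\rho = C_p = \rho_b = c_b = 1$ and the substitution $u = U - Q_0$ carried out in the text, the governing equation becomes $u_t - \nabla\cdot(a\nabla u) + c(x)u = f(x)$ with $c(x) = \omega_b(x)$ and $f(x) = Q_m(x)$, together with homogeneous Dirichlet boundary data (since the boundary temperature is assumed constant and equal to $Q_0$) and initial datum $\varphi$. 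This is exactly the model \eqref{eq:model2} with the lower-order coefficient $c$ and source $f$ as the unknowns.

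Next I would observe that the smoothness hypotheses imposed throughout the paper are assumed to hold in this application: $a$ is bounded below by $\underline{a}>0$, the perfusion coefficient satisfies $0 < \underline{c} \le \omega_b(x) \le \overline{c}$ so that $(\omega_b, Q_m) \in \D(\F)$, and $\varphi \in H^2(\Omega)\cap H^1_0(\Omega)$. With these in place, the parameter-to-solution map $\F$ of \eqref{operator_A} and its restrictions $F_c$ and $A_f$ are well-defined, and the noisy skin-surface measurement $g^\delta$ satisfies \eqref{eq:noise}. I would then simply invoke the properties already proved for these operators: continuity (Theorem~\ref{theo:continuity} and Corollary~\ref{coro-continuity}), Fr\'echet differentiability with a Lipschitz derivative satisfying the local tangential cone condition ($F_c$ via Proposition~\ref{lemma:TCC_F}, $A_f$ via Proposition~\ref{lemma:TCC_A}), and the adjoint formula of Lemma~\ref{coro:adjoint-der-F}. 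These are precisely the structural hypotheses under which the coupled Landweber--iterated-Tikhonov iteration \eqref{eq:iteration} was shown to be convergent and stable.

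The conclusion then follows by direct application of Theorem~\ref{theo:convergence-estability}: choosing the relaxation parameter $\tau$ as in \eqref{eq:r} and $\gamma < T^{-2}$, the iteration \eqref{eq:iteration} stopped by the unified discrepancy principle \eqref{eq:discrepancy} produces a subsequence $(c^\delta_{k(\delta,g^\delta)}, f^\delta_{k(\delta,g^\delta)}) = (\omega_b^\delta, Q_m^\delta)$ converging, as $\delta \to 0$, to a solution of the identification problem. Since Theorem~\ref{theo:uniqueness-a} (under the assumption $g \neq \varphi$ a.e., guaranteed here because the tumor activity produces a genuine temperature change from the initial heating) already furnishes uniqueness of $(\omega_b, Q_m)$, this limit is the unique true parameter pair, giving both convergence and stability with respect to noise.

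I do not expect any genuine obstacle, since the entire content of the statement is a verification that the abstract theorem applies. The only point requiring mild care is confirming that the normalization and change of variables $u = U - Q_0$ indeed render the boundary condition homogeneous and place the coefficients within the admissible set $\D(\F)$; once this translation is checked, the result is an immediate corollary of Theorem~\ref{theo:convergence-estability}.
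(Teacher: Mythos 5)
Your proposal is correct and takes essentially the same route as the paper, whose proof is simply the one-line observation that the iteration \eqref{eq:iteration} together with Theorem~\ref{theo:convergence-estability} yields the assertion. You merely spell out the translation of the Pennes model into the form \eqref{eq:model2} and the verification of the hypotheses in more detail (the appeal to Theorem~\ref{theo:uniqueness-a} at the end is a harmless extra, not needed for the convergence-and-stability claim itself).
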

\begin{proof}
Consider the iterative regularization approach given by
\eqref{eq:iteration}. Then,
Theorem~\ref{theo:convergence-estability} imply the assertion.
\end{proof}

\section{Conclusions and Further Works}\label{sec:conclusion}

In this work, we prove uniqueness of the spacewise parameter $c$ and
the source $f$ in \eqref{eq:model2}, for a given extra final time
measurement \eqref{eq:ad-measurement}. Moreover, we derive
sufficient properties of the parameter-to-solution map to guarantee
convergence and stability of approximated solutions obtained by the
proposed iterative regularization method, if the stopping index is
determined by the discrepancy principle~\eqref{eq:discrepancy}. We
also have analyzed the application of the theory developed here for
a simplified version of the thermography model in melanoma
identification. We are able to say that exists a unique blood
perfusion rate and a unique metabolic heat generation in the
simplified model~\eqref{eq:penne} for a given final time
measurement~\eqref{eq:ad-measurement}.

The next step in this line is the numerical implementation. In
particular the analysis of the three-dimensional Pennes equation was
not totally covered. In the three-dimensional case there are also
many numerical difficulties, beyond the theoretical, that needs
attention \cite{TIL2010}.

\section*{Acknowledgments} A. De Cezaro is grateful for
the support in the form of a visitor fellowship obtained from the
Brazil Visiting Fellows Scheme at University of Birmingham, UK,
offered  in the summer of 2012 during which period this work was
started and to SWB Post-Doc program process n. 200815/2012-1.

\appendix
\counterwithin{theo}{section} \numberwithin{equation}{section}
\section{Appendix}

In this appendix we will provide an auxiliary lemma that we need in
the prove of Theorem~\ref{theo:continuity}.

\begin{lemma}\label{lemma:aux}
Let 
$\{\varphi_k\}_{k\in \N}$ be a sequence of functions, with
$\underline{c} \leq \varphi_k(x) \leq \overline{c}$ for all $x\in
\Omega$. If $\varphi_k\rightarrow \varphi$ in $L^p(\Omega)$ for some
$p \in [1,\infty)$,  then $\varphi_k\rightarrow \varphi$ in
$L^p(\Omega)$ for all $p \in [1,\infty)$.
\end{lemma}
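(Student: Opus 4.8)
The plan is to exploit the uniform two-sided bound $\underline{c}\le\varphi_k(x)\le\overline{c}$ together with the hypothesis that $\varphi_k\to\varphi$ in some single $L^{p_0}(\Omega)$. First I would observe that the limit $\varphi$ inherits the same bounds: since $\varphi_k\to\varphi$ in $L^{p_0}(\Omega)$, there is a subsequence converging pointwise almost everywhere, and the a.e. limit of functions lying in $[\underline{c},\overline{c}]$ again lies in $[\underline{c},\overline{c}]$, so $\underline{c}\le\varphi(x)\le\overline{c}$ a.e. Consequently every difference $\varphi_k-\varphi$ is bounded in $L^\infty(\Omega)$ by $\overline{c}-\underline{c}$, uniformly in $k$.

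The key step is then an interpolation estimate for the $L^q$ norm of $\varphi_k-\varphi$ in terms of the $L^{p_0}$ norm we already control and the uniform $L^\infty$ bound. For an arbitrary target exponent $q\in[1,\infty)$ I would split into two cases. If $q\le p_0$, then since $\Omega$ is bounded, the inclusion $L^{p_0}(\Omega)\hookrightarrow L^q(\Omega)$ is continuous, so $\norm{\varphi_k-\varphi}_{L^q(\Omega)}\le C\norm{\varphi_k-\varphi}_{L^{p_0}(\Omega)}\to 0$ directly. If $q>p_0$, I would write, pointwise,
\begin{align*}
\enorm{\varphi_k-\varphi}^q
= \enorm{\varphi_k-\varphi}^{p_0}\,\enorm{\varphi_k-\varphi}^{q-p_0}
\le (\overline{c}-\underline{c})^{q-p_0}\,\enorm{\varphi_k-\varphi}^{p_0},
\end{align*}
using the uniform $L^\infty$ bound on the second factor. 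Integrating over $\Omega$ gives
\begin{align*}
\norm{\varphi_k-\varphi}_{L^q(\Omega)}^q
\le (\overline{c}-\underline{c})^{q-p_0}\,\norm{\varphi_k-\varphi}_{L^{p_0}(\Omega)}^{p_0}\longrightarrow 0,
\end{align*}
so $\varphi_k\to\varphi$ in $L^q(\Omega)$ as well. Since $q\in[1,\infty)$ was arbitrary, convergence holds in every $L^p(\Omega)$ with $p\in[1,\infty)$.

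I expect the only genuine subtlety to be the justification that the limit $\varphi$ satisfies the same pointwise bounds, which is needed to guarantee the uniform $L^\infty$ control on the differences; this is handled cleanly by passing to an a.e.-convergent subsequence and then noting that, once the $L^q$-limit is identified, it must coincide with $\varphi$ by uniqueness of limits. Everything else is the elementary interpolation/boundedness-of-domain argument above, so there is no real obstacle beyond bookkeeping. The finiteness of the measure of $\Omega$ is used essentially in the case $q\le p_0$ and should be flagged explicitly.
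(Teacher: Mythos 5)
Your proof is correct, and it is organized differently from the paper's. The key pointwise inequality is the same in both arguments --- for a large target exponent $q>p_0$ one writes $|h|^{q}=|h|^{p_0}\,|h|^{q-p_0}$ and absorbs the excess power into a uniform sup-bound, while for $q<p_0$ one uses H\"older and the boundedness of $\Omega$ --- but you apply it directly to the differences $\varphi_k-\varphi$, whereas the paper applies it to $\varphi_n-\varphi_l$ to show that $\{\varphi_k\}$ is Cauchy in $L^p(\Omega)$, invokes completeness to obtain a limit $\tilde\varphi$, and then must identify $\tilde\varphi$ with $\varphi$ through embedding and triangle-inequality arguments (this identification step is where the paper's write-up is least clean). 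Your route needs one extra observation that the paper's avoids, namely that the limit $\varphi$ itself satisfies $\underline{c}\le\varphi\le\overline{c}$ a.e., which you correctly obtain from an a.e.-convergent subsequence; in exchange you get the one-line quantitative estimate $\|\varphi_k-\varphi\|_{L^q(\Omega)}^q\le(\overline{c}-\underline{c})^{q-p_0}\|\varphi_k-\varphi\|_{L^{p_0}(\Omega)}^{p_0}$ and you treat an arbitrary source exponent $p_0$ symmetrically, while the paper fixes the source exponent at $2$ and defers the ``reciprocal'' direction to a remark. The closing appeal to ``uniqueness of limits'' is superfluous in your set-up, since you never introduce an auxiliary limit $\tilde\varphi$; everything else is fine.
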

\begin{proof}
We remark that, since $\Omega$ is bounded, $\varphi_k \in
L^p(\Omega)$ for all $1\leq p  \leq \infty$.

Assume $\varphi_k\rightarrow \varphi$ in $L^2(\Omega)$.\newline
\textbf{Case 1.1} $\,\, (2<p<\infty )$\newline
For all $n,l \in \N$ with $n,l >k_0$
$$
\int_\Omega |\varphi_n - \varphi_l|^p dx =
  \int_\Omega |\varphi_n- \varphi_l|^2|\varphi_n- \varphi_l|^{p-2}dx
  \leq (2C)^{p-2}\int_\Omega |\varphi_n- \varphi_l|^2 dx\,.
$$
Hence, $\{\varphi_k\}$ is a Cauchy sequence in $L^p(\Omega)$.
Therefore, $\varphi_k\rightarrow \tilde \varphi$ in $L^p(\Omega)\,,
2< p< \infty$.

Since, $L^p(\Omega)$ is continuously embedding in $L^2(\Omega)$ for
$ 2< p< \infty$, we have $\tilde\varphi \in L^2(\Omega)\cap
L^p(\Omega)$ and
$$
\| \varphi_k - \tilde\varphi \|_{L^2(\Omega)}\leq C \| \varphi_k -
\tilde\varphi \|_{L^p(\Omega)}\,.
$$
By the uniqueness of the limit $ \varphi= \tilde\varphi $.
\newline
\textbf{Case 1.2} $\,\,(1\leq p \leq 2)$\newline
For all $n,l \in \N$ with $n,l >k_0$
$$
\int_\Omega |\varphi_n - \varphi_l|^p dx =
  \int_\Omega \Big(|\varphi_n- \varphi_l|^2\Big)^{\frac{p}{2}}dx
  \leq (meas(\Omega))^{p^*}\left(\int_\Omega |\varphi_n- \varphi_l|^2 dx\right)^{\frac{2}{p}}\,.
$$
In other words, $\{\varphi_k\}$ is a Cauchy sequence in
$L^p(\Omega)$ and hence, $\varphi^k\rightarrow \tilde\varphi$ in
$L^p(\Omega)$ for $1\leq p \leq 2$.\newline
\textbf{Claim.} $\tilde\varphi \in L^2(\Omega)\cap L^p(\Omega)$.
\begin{align*}
\int_\Omega |\tilde\varphi|^2 dx & \leq C\left(
  \int_\Omega |\varphi_k- \tilde\varphi|^2 + |\varphi_k|^2 dx\right)
   \leq C\left(\int_\Omega \Big(|\varphi_n- \varphi_l|^p\Big)^{\frac{2}{p}}dx + \int_\Omega |\varphi_k|^2\right) dx \\
  & \leq C \left(meas(\Omega))^{1-\frac{2}{p}}\int_\Omega |\varphi_k- \tilde\varphi|^p dx +C_1\right) < \infty \,.
\end{align*}%
Hence, $\tilde\varphi \in L^2(\Omega)$. The continuous embedding of
$L^2(\Omega)$ in $L^p(\Omega)$, for $1 \leq p < 2$, conclude the
claim.

%

Now, given $\ve >0 $, there exist $k \in \N$ large enough such that
$$
\|\varphi- \tilde\varphi\|_{L^p(\Omega)}\leq
      \|\varphi_k- \varphi \|_{L^p(\Omega)} + \|\varphi_k- \tilde\varphi \|_{L^p(\Omega)}
      \leq  C\|\varphi_k- \varphi \|_{L^2(\Omega)} +  \|\varphi_k- \tilde\varphi \|_{L^p(\Omega)} < \ve
      \,.
$$
Therefore, $ \varphi= \tilde\varphi $.

The arguments used in the proof of the reciprocal are similar to
those presented above. Thus we will omit the proof.
\end{proof}

The next theorem is a version of Meyers's Theorem \cite[Theorem
1]{M63} adapted to our case.

\begin{theo}\label{th:Meyers}[Meyers]
Let $\Omega$ and the coefficients $(c, f) \in \D(\F)$ as in
Definition~\ref{df:admissible}. Then, there exists a $p_0 > 2$ such
that the unique solution $u = u(a, f)$ of \eqref{eq:model2} belongs
to $L^2(0,T;W^{1,p}(\Omega)) \cap L^2(0,T; H^1_0(\Omega))$ for any
$p \in [2, p_0[$.
\end{theo}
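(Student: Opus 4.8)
The plan is to establish higher integrability of $\nabla u$ by a parabolic version of the reverse Hölder / Gehring argument that underlies Meyers's original proof \cite{M63}, and then to convert the resulting space–time integrability into the mixed norm $L^2(0,T;W^{1,p}(\Omega))$ demanded by the statement. The starting point is the baseline exponent $p=2$: by the hypotheses of Definition~\ref{df:admissible} the operator $u\mapsto-\nabla\cdot(a\nabla u)+cu$ is uniformly elliptic with $0<\underline a\le a$ and $0<\underline c\le c\le\overline c$, so standard parabolic theory \cite{LandSolUr-1968} produces the unique $u\in L^2(0,T;H^1_0(\Omega))\cap C([0,T];L^2(\Omega))$ together with the energy bound $\norm{u}_{L^2(0,T;H^1_0(\Omega))}\le C(\norm{f}_{L^2(\Omega)}+\norm{\varphi}_{H^1(\Omega)})$. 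This already gives the second membership $u\in L^2(0,T;H^1_0(\Omega))$ in the conclusion and identifies precisely the quantity to be upgraded.

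The core is a self-improvement argument on parabolic cylinders $Q_r(z_0)=B_r(x_0)\times(t_0-r^2,t_0)$. First I would derive a parabolic Caccioppoli inequality by testing the equation $u_t-\nabla\cdot(a\nabla u)+cu=f$ against $\eta^2(u-\lambda)$, with $\eta$ a cutoff adapted to $Q_r$ and $\lambda$ an arbitrary constant; using only $a\ge\underline a$ and the bounds on $c$, one controls $\int_{Q_{r/2}}|\nabla u|^2$ by $r^{-2}\int_{Q_r}|u-\lambda|^2$ plus the forcing contribution $\int_{Q_r}|f|^2$. Combining this with the parabolic Sobolev–Poincaré inequality (taking $\lambda$ to be the mean of $u$ and interpolating the $L^\infty_tL^2_x$ and $L^2_tH^1_x$ norms furnished by the energy inequality) yields a reverse Hölder inequality: the space–time average of $|\nabla u|^2$ over $Q_{r/2}$ is bounded by a power mean of $|\nabla u|^{2\theta}$ over $Q_r$ for some subunit exponent $\theta\in(0,1)$, modulo the forcing term. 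The self-improving property of such inequalities, namely Gehring's lemma in its parabolic form, then supplies an exponent $p_0>2$ depending only on $n$, $\underline a$, $\overline a$, $\underline c$, $\overline c$ and $\Omega$ — hence uniform over $(c,f)\in\D(\F)$ — and the conclusion $\nabla u\in L^{p}((0,T)\times\Omega)$ for every $2\le p<p_0$. An equivalent route, closer to the elliptic original, is the perturbation argument: write $a=\overline a_0+(a-\overline a_0)$ for a constant $\overline a_0$, treat $(a-\overline a_0)\nabla u$ as a perturbation, and invoke the $L^p$ maximal regularity of the constant-coefficient heat operator together with a Neumann series valid for $p$ near $2$.

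It remains to treat the endpoints of the cylinders and to pass to the mixed norm. Near $\partial\Omega$ the homogeneous Dirichlet condition and the smoothness of $\partial\Omega$ allow me to flatten the boundary and extend $u$ by odd reflection, so the interior Caccioppoli/reverse Hölder scheme applies verbatim to half-cylinders; near $t=0$ the regularity $\varphi\in H^2(\Omega)\cap H^1_0(\Omega)$ supplies the compatibility needed to run the same estimate on initial cylinders. Once $\nabla u\in L^p((0,T)\times\Omega)$ is known for $2\le p<p_0$, I convert it to the stated norm using that $(0,T)$ has finite measure and $p>2$: by Hölder in time, $\int_0^T\norm{\nabla u(t)}_{L^p(\Omega)}^2\,dt\le T^{1-2/p}\big(\int_0^T\norm{\nabla u(t)}_{L^p(\Omega)}^p\,dt\big)^{2/p}<\infty$, which is exactly $u\in L^2(0,T;W^{1,p}(\Omega))$. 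The main obstacle I anticipate is the boundary and initial-time analysis: producing the reverse Hölder inequality uniformly up to $\partial\Omega\times\{0\}$ while keeping $p_0$ independent of the particular $(c,f)\in\D(\F)$, and carefully carrying the lower-order term $cu$ and the time-independent source $f\in L^2(\Omega)$ through the Caccioppoli step so that they enter only as admissible forcing. By comparison, the interior self-improvement and the final conversion in time are routine.
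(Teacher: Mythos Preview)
Your argument is correct, but it is a genuinely different route from the paper's. The paper does not run the parabolic Caccioppoli/reverse H\"older/Gehring machinery at all; instead it exploits the additional regularity already established earlier (from the smooth data $\varphi\in H^2(\Omega)\cap H^1_0(\Omega)$ and time-independent coefficients one has $u\in C^1((0,T);H^1_0(\Omega)\cap H^2(\Omega))$, in particular $u_t(\cdot,t)\in L^2(\Omega)$ for each $t$), freezes $t$, and observes that $u(\cdot,t)$ solves the \emph{elliptic} boundary value problem $-\nabla\cdot(a\nabla u(\cdot,t))+c\,u(\cdot,t)=f-u_t(\cdot,t)$ in $\Omega$, $u(\cdot,t)=0$ on $\partial\Omega$. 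A direct application of the original elliptic Meyers theorem \cite[Theorem~1]{M63} then yields $u(\cdot,t)\in W^{1,p}(\Omega)$ for $p\in[2,p_0[$, and integrating the resulting bound over $t\in(0,T)$ gives the stated membership. What each approach buys: the paper's proof is essentially a two-line reduction to the classical elliptic result, but it hinges on having $u_t\in L^2$ slice-by-slice, i.e.\ on the strong regularity coming from the smooth data; your parabolic self-improvement argument is considerably longer and requires handling boundary and initial cylinders, yet it is more robust---it would go through with merely $\varphi\in L^2(\Omega)$ and rough coefficients, and it produces a $p_0$ that is manifestly uniform over $(c,f)\in\D(\F)$ without appealing to higher time regularity. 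In the present setting, however, your extra generality is not needed, and the paper's shortcut is the more economical choice.
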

\begin{proof}
It follows from the classical parabolic partial differential theory
that that $u \in L^2(0,T; H^1_0(\Omega)) \cap C([0,T];
H_0^1(\Omega))$, \cite{LandSolUr-1968}. Therefore, for each $t\in
]0,T[$, $u(\cdot, t)$ satisfies the following elliptic equation
\begin{align*}
- \nabla (a(x) \nabla u(x,t)) + c(x)u(x,t)_t & = f(x) \,, x \in \Omega\\
u (x,t) & = 0     \,\, \mbox{on } \partial \Omega\,.
\end{align*}
Follows from Meyers's Theorem \cite[Theorem 1]{M63}, that there
exists a $p_0 > 2$ such that $u(\cdot, t) \in W^{1,p}(\Omega)$ for
all $p\in [2, p_0[$. It proves the assertion.
\end{proof}

\bibliographystyle{amsplain}
\bibliography{uniqueness_LOT}

\end{document}